\documentclass[11pt]{article}

\usepackage[square,numbers,sort&compress]{natbib}

\usepackage{tablefootnote}
\usepackage[utf8]{inputenc} 
\usepackage[T1]{fontenc}    
\usepackage{hyperref}       
\usepackage{url}            
\usepackage{booktabs}       
\usepackage{amsfonts}       
\usepackage{nicefrac}       
\usepackage{microtype}      
\usepackage{tikz}
\usetikzlibrary{decorations.pathreplacing,calc}

\usepackage[algo2e,ruled]{algorithm2e}
\LinesNumbered

\usepackage{verbatim}
\usepackage{amsthm}
\usepackage{amsfonts}
\usepackage[nonamelimits]{amsmath}
\usepackage{amssymb}
\usepackage{sansmath}
\usepackage{color}
\usepackage{xcolor}
\usepackage{bm}
\usepackage{bbm}
\usepackage{fullpage}
\usepackage[bottom]{footmisc}

\theoremstyle{plain}
\newtheorem{theorem}{Theorem}[section]

\newtheorem{lemma}[theorem]{Lemma}

\theoremstyle{definition}

\usepackage{subfig}
\usepackage{graphicx}
\usepackage{listings}
\usepackage{enumitem}
\usepackage{tabularx}
\newtheoremstyle{named}{}{}{\itshape}{}{\bfseries}{.}{.5em}{\thmnote{#3}}
\theoremstyle{named}

\usepackage{tikz}
\usetikzlibrary{fit,calc}

\usepackage{tikz}
\usetikzlibrary{cd}

\DeclareSymbolFont{extraup}{U}{zavm}{m}{n}
\DeclareMathSymbol{\varheart}{\mathalpha}{extraup}{86}
\DeclareMathSymbol{\vardiamond}{\mathalpha}{extraup}{87}

\usepackage{mathtools}

\makeatletter
\newcommand*{\inlineequation}[2][]{%
  \begingroup
    \refstepcounter{equation}%
    \ifx\\#1\\%
    \else
      \label{#1}%
    \fi
    \relpenalty=10000 %
    \binoppenalty=10000 %
    \ensuremath{%
      #2%
    }%
    ~\@eqnnum
  \endgroup
}
\makeatother
\usepackage[table]{xcolor}

\usepackage{threeparttable}

\usepackage{todonotes}

\usepackage{xspace}

\xspaceaddexceptions{[]\{\}}

\usepackage{hyperref}

\usepackage{thm-restate}

\usepackage{bm}
\makeatletter
\newcommand*{\rom}[1]{\expandafter\@slowromancap\romannumeral #1@}

\usepackage{multirow}

\usepackage{tablefootnote}
\usepackage{float}
\usepackage{makecell}
\usepackage{authblk}

\allowdisplaybreaks

\title{On the Complexity of Finding Stationary Points of $p$-th Order H\"older Smooth Functions in One Dimension}
\date{\vspace{-5ex}}
\begin{document}
\author[1]{Huanjian Zhou\thanks{zhou-huanjian185@g.ecc.u-tokyo.ac.jp}}
\author[1,2]{Taiji Suzuki\thanks{taiji@mist.i.u-tokyo.ac.jp }}
\affil[1]{The University of Tokyo}
\affil[2]{RIKEN Center for Advanced Intelligence Project}

\maketitle

\begin{abstract}%
We study the query complexity of finding an an $\varepsilon$-stationary points of a one-dimensional non-convex function whose $p$-th derivative is $\beta_p$-H\"older continuous. 
We determine the optimal complexity for deterministic and randomized algorithms and for oracles that return only derivatives, only function values, or full local information.
Randomness provably helps when only derivatives are available, function values help in all cases, and higher-order derivatives never do.
Together with known results in higher dimensions, this shows how these benefits depend on the dimension.
Our deterministic lower bound also implies that the optimal accuracy exponents
of objective-function-free adaptive regularization of order $p$
(OFFAR$p$) in every dimension $d\ge1$ and of deterministic
gradient-only trapping in dimension $d=2$.
\end{abstract}



\section{Introduction}

Smooth non-convex minimization problems  are common in machine learning,
statistics, and scientific computing \citep{BCN18,LW15,NW06}.
Since finding a global minimizer
is generally intractable without additional global structure~\citep{PV91}, the standard goal is to find an approximate first-order stationary point. We study the query complexity of this task and how it depends on the
smoothness of the objective, the information returned by the oracle, and the algorithm's access to randomness.

In high dimension, the $O(\varepsilon^{-2})$ rate of gradient descent is optimal
for $C^{1,1}$ objectives, even among randomized algorithms with access to
function values~\citep{carmon2017lower}.
The hard instances hide directions and require dimension polynomial in
$1/\varepsilon$, while faster algorithms exist in fixed
dimension~\citep{vavasis1993black,bubeck2020trap,hollender2023computational,zhou2025adaptive,ZSS26}.
We study the simplest such setting, dimension one, where there are no
directions to hide, so hardness can only come from what the oracle leaves
unobserved between queries.
This makes a complete characterization possible, and every lower bound extends
to all dimensions.

For one-dimensional $C^{1,1}$ objectives with normalized smoothness
constant and initial gap, the query complexity is completely
understood~\citep{chewi2023complexity}.
With derivative queries only, it is $\Theta(\varepsilon^{-2})$ for
deterministic algorithms and $\Theta(\varepsilon^{-1})$ for randomized
algorithms.
If function values are also available, it is
$\Theta(\log(1/\varepsilon))$ for both.
Hence randomness and function values each reduce the query complexity.

This classification is restricted to Lipschitz gradients and to oracles
that return at most first-order information.
In higher dimensions, both restrictions matter.
Higher-order smoothness improves the complexity of gradient-only
algorithms~\citep{agarwal2017finding, birgin2017worst,carmon2017convex,carmon2018accelerated, li2023restarted, marumo2024parameter, ZSS26}, and algorithms that query derivatives of order
up to $p$ improve it further~\citep{nesterov2006cubic,cartis2011adaptive,carmon2017lower,gratton2023convergence}.
In one dimension, it is not known how higher-order smoothness changes the
classification above, or whether higher-order derivatives help.
This motivates the following question:
\begin{center}
\emph{How do \underline{randomness}, \underline{function values}, and \underline{higher-order derivatives} help\\
under \underline{higher-order smoothness} in one dimension?}
\end{center}

\paragraph{Our results.} We answer this question by determining the optimal query complexity for deterministic and randomized algorithms with each oracle and smoothness (see Table~\ref{tab:complexity-comparison}).
Our main conclusions are as follows.

\begin{table}[ht]
\centering
\caption{Comparison of query complexities with normalized smoothness
constants and initial gap. Here $\nu_r:=r+\beta_r$.}
\label{tab:complexity-comparison}
\footnotesize
\renewcommand{\arraystretch}{1.1}
\setlength{\tabcolsep}{4pt}
\begin{tabular}{@{}llll@{}}
\toprule
Algorithm & Oracle & Smoothness & Complexity \\
\midrule

\multirow{7}{*}{Deterministic}
& $f'$
& $C^{1,1}$
& $\Theta(\varepsilon^{-2})$
  \citep{chewi2023complexity} \\

& $f'$
& $\displaystyle\bigcap_{r=1}^p C^{r,\beta_r}$
& $O\big(\min_{1\le r\le p}
  \varepsilon^{-\frac{\nu_r}{\nu_r-1}}\big)$~(Theorem~\ref{thm:randomized-derivative-upper-bound})\\

& $f',\ldots,f^{(p)}$
& $\displaystyle\bigcap_{r=1}^p C^{r,\beta_r}$
& $\Omega\big(\min_{1\le r\le p}
  \varepsilon^{-\frac{\nu_r}{\nu_r-1}}\big)$ (Theorem~\ref{thm:deterministic-derivative-jet-lower}) \\

& $f',\ldots,f^{(p)}$
& $C^{p,\beta_p}$
& $O\big(\varepsilon^{-\frac{\nu_p}{\nu_p-1}}\big)$
  (OFFAR$p$~\citep{gratton2021h}) \\
\midrule

\multirow{4}{*}{Randomized}
& $f'$
& $C^{1,1}$
& $\Theta(\varepsilon^{-1})$
  \citep{chewi2023complexity} \\

& $f'$
& $C^{p,\beta_p}$
& $O(\varepsilon^{-1})$ (Theorem~\ref{thm:randomized-derivative-upper-bound})\\

& $f',\ldots,f^{(p)}$
& $\displaystyle\bigcap_{r=1}^p C^{r,\beta_r}$
& $\Omega(\varepsilon^{-1})$ (Theorem~\ref{thm:randomized-derivative-lower-bound}) \\
\midrule

\multirow{4}{*}{\shortstack[l]{Deterministic \\or randomized}}
& $f,f'$
& $C^{1,1}$
& $\Theta(\log(\frac{1}{\varepsilon}))$
  \cite{chewi2023complexity} \\

& $f$
& $C^{p,\beta_p}$
& $O(\log(\frac{1}{\varepsilon}))$ (Theorem~\ref{thm:deterministic-zeroth-order-upper}) \\

& $f,f',\ldots,f^{(p)}$
& $\displaystyle\bigcap_{r=1}^p C^{r,\beta_r}$
& $\Omega(\log(\frac{1}{\varepsilon}))$ (Theorem~\ref{thm:randomized-full-jet-lower-bound})\\
\bottomrule
\end{tabular}
\vspace{-0.2cm}
\end{table}
\begin{enumerate}[leftmargin = *,itemsep=2pt, parsep=0pt, topsep=3pt]
\item \textbf{Optimality of OFFAR$p$ for any dimension $d \geq 1$.} Our deterministic lower bound can be extended to every dimension $d\ge1$ by
embedding.
It shows that the $\varepsilon^{-\frac{\nu_p}{\nu_p-1}}$ dependence on
$\varepsilon$ of the H\"older extension of objective-function-free adaptive
regularization of order $p$ (OFFAR$p$)~\citep{gratton2023convergence}
is optimal among deterministic algorithms that query only derivatives of
orders $1,\ldots,p$.
For $\beta_p=1$, the classical matching oracle lower bound uses
a construction with $d=\Omega_p(\varepsilon^{-\frac{p+1}{p}})$~\citep{carmon2017lower}. This lower bound also implies that deterministic gradient-only trapping is optimal in dimension $d=2$~\citep{ZSS26}.
%

\item \textbf{The benefit of randomness for gradient-only algorithms.} 
Under higher-order H\"older smoothness, randomization reduces the
optimal gradient-only query complexity in one dimension from
$\Theta(\varepsilon^{-\frac{\nu_p}{\nu_p-1}})$
to $\Theta(\varepsilon^{-1})$.
The deterministic and the randomized lower bounds both extend to every dimension $d\geq 1$ by embedding.
In dimension two, the randomized gradient-only trapping algorithm
of \cite{ZSS26} makes
$\widetilde O(\varepsilon^{-\frac{2\nu_p}{2\nu_p-1}})$ queries,
stricyly fewer than the $\Omega(\varepsilon^{-\frac{\nu_p}{\nu_p-1}})$ required of
deterministic algorithms, so randomness still helps in two dimension.
By contrast, for $C^{1,1}\cap C^{2,1}$ objectives in sufficiently
high dimensions, \cite{ZSS26} prove a randomized lower bound
of $\Omega(\varepsilon^{-\frac74})$,
matching the deterministic gradient-only upper bound.
%

\item \textbf{Function values help but higher-order derivatives do not.}
For a fixed smoothness class in one dimension, querying
$f',\ldots,f^{(p)}$ does not improve the optimal complexity
over querying $f'$ alone.
By contrast, function values alone achieve $\Theta(\log(1/\varepsilon))$
complexity for both deterministic and randomized algorithms,
and this rate remains optimal even with access to
$f,f',\ldots,f^{(p)}$.
Function values also help deterministic algorithms in dimension two:
for $C^{1,1}$ objectives, the optimal function-value-only and
gradient-only complexities are $\Theta(\varepsilon^{-1})$ and
$\Theta(\varepsilon^{-2})$,
respectively~\citep{hollender2023computational,chewi2023complexity}.

By contrast, higher-order derivatives can improve complexity
in sufficiently high dimensions.
For $C^{1,1}\cap C^{2,1}$ objectives, randomized algorithms
with access to function values and gradients require
$\Omega(\varepsilon^{-\frac74})$ queries~\citep{ZSS26},
whereas algorithms with access to gradients and Hessians achieve
$O(\varepsilon^{-\frac32})$ queries~\citep{gratton2023convergence}.
Thus, the benefits of function values and higher-order derivatives
depend on dimension.

\item \textbf{The benefit of high-order smoothness.}
In one dimension, smoothness affects only the deterministic derivative-only
complexity of $\Theta(\varepsilon^{-\frac{\nu_p}{\nu_p-1}})$. Its exponent $\frac{\nu_p}{\nu_p-1}$ decreases toward $1$ as
$\nu_p=p+\beta_p$ grows, so the gap to randomized algorithms narrows as the
order of smoothness increases.
The randomized derivative-only complexity $\Theta(\varepsilon^{-1})$ and
the complexity $\Theta(\log(1/\varepsilon))$ with function values do not
depend on $\nu_p$.
\end{enumerate}

\section{Preliminaries}

\paragraph{Function classes and stationarity.}
Fix an integer $p\ge1$ and a set
$\mathcal I\subseteq\{1,\ldots,p\}$ with $p\in\mathcal I$.
For each $r\in\mathcal I$, let $\beta_r\in(0,1]$ and $L_r>0$,
and write $\boldsymbol\beta:=(\beta_r)_{r\in\mathcal I}$,
$\mathbf L:=(L_r)_{r\in\mathcal I}$, and $\nu_r:=r+\beta_r$.
For $f\in C^p(\mathbb R)$, define
$[f^{(r)}]_{\beta_r}
:=\sup_{x\ne y}
\frac{|f^{(r)}(x)-f^{(r)}(y)|}{|x-y|^{\beta_r}}$
and $f_\star:=\inf_{x\in\mathbb R}f(x)$.
Given $\Delta>0$, let
\[
\mathcal F_{\mathcal I,\boldsymbol\beta}(\mathbf L,\Delta)
:=
\left\{
f\in C^p(\mathbb R):
\begin{array}{l}
[f^{(r)}]_{\beta_r}\le L_r
\quad\text{for every }r\in\mathcal I,\\
f_\star>-\infty,\quad f(0)-f_\star\le\Delta
\end{array}
\right\}.
\]
The choices $\mathcal I=\{p\}$, $\mathcal I=\{1,p\}$,
and $\mathcal I=\{1,\ldots,p\}$ impose regularity at the highest
order, at the first and highest orders, and at every order,
respectively. In particular, $\mathcal I=\{1,p\}$ with $\beta_1=1$
describes the class with both Lipschitz gradients and
$\beta_p$-H\"older continuous $p$-th derivatives.
A point $x\in\mathbb R$ is
$\varepsilon$-stationary if $|f'(x)|<\varepsilon$.

\paragraph{Oracle models.}
We consider exact local oracles queried at arbitrary points
$x\in\mathbb R$. The zeroth-order oracle returns
$\mathcal O_f^{0}(x):=f(x)$, and the derivative-only oracle returns
$\mathcal O_f^{1}(x):=f'(x)$.
The value-gradient oracle returns
$\mathcal O_f^{0:1}(x):=(f(x),f'(x))$.
To study access to higher-order derivatives, we also define
$\mathcal O_f^{1:p}(x):=(f'(x),\ldots,f^{(p)}(x))$
and
$\mathcal O_f^{0:p}(x):=(f(x),f'(x),\ldots,f^{(p)}(x))$, which we call the full-information oracle.
Thus, $\mathcal O_f^{1:p}$ provides derivatives without function
values, whereas $\mathcal O_f^{0:p}$ additionally provides the
function value. Each query returns the entire specified tuple
and counts as one oracle call. 

\paragraph{Algorithms.}
Given $p,\mathcal I,\boldsymbol\beta,\mathbf L,\Delta$ and
$\varepsilon$, an algorithm $A$ adaptively queries $\mathcal O_f$
and outputs $\widehat x_A(f)\in\mathbb R$.
Its queries and output depend on the preceding oracle responses
and, if randomized, an internal random seed independent of $f$.
The algorithm succeeds if $|f'(\widehat x_A(f))|<\varepsilon$.

\paragraph{Query complexity.}
For $*\in\{\mathrm{det},\mathrm{rand}\}$, let
$\mathcal C_*^{(\mathcal I,\boldsymbol\beta)}
(\varepsilon;\mathbf L,\Delta,\mathcal O)$
be the smallest $N\in\mathbb N_0$ such that some algorithm of type $*$
makes at most $N$ oracle queries on every run and finds an
$\varepsilon$-stationary point for every
$f\in\mathcal F_{\mathcal I,\boldsymbol\beta}(\mathbf L,\Delta)$,
with certainty if deterministic and with probability at least
$1/2$ if randomized.
If no such finite $N$ exists, the complexity is defined to be $+\infty$.

The following rescaling normalizes the highest-order H\"older
constant and the initial value gap as $L_p=\Delta=1$, while transforming the
remaining regularity constants. The proof is deferred to Appendix~\ref{app:1}.

\begin{lemma}[\bf Normalization]
\label{lmm:normal}
Let $\nu_p:=p+\beta_p$ and set $s:=\big(\frac{\Delta}{L_p}\big)^{\frac1\nu},
\widehat L_r:=\frac{L_r s^{r+\beta_r}}{\Delta}
\quad(r\in\mathcal I),
\widehat{\mathbf L}:=(\widehat L_r)_{r\in\mathcal I}.$
Then $\widehat L_p=1$, and for any of the oracle models defined
above, $*\in\{\mathrm{det},\mathrm{rand}\}$, and $\varepsilon>0$,
\[
\mathcal C_*^{(\mathcal I,\boldsymbol\beta)}
(\varepsilon;\mathbf L,\Delta,\mathcal O)
=
\mathcal C_*^{(\mathcal I,\boldsymbol\beta)}
\big(
    \tfrac{\varepsilon}
    {L_p^{1/\nu_p}\Delta^{(\nu_p-1)/\nu_p}};
    \widehat{\mathbf L},1,\mathcal O
\big).
\]
\end{lemma}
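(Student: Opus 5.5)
The plan is to exhibit an explicit bijection between the two function classes, given by rescaling the input and the output, and to show that it carries algorithms, queries and successes across exactly. Here $\nu$ in the definition of $s$ is read as $\nu_p$. For $f\in\mathcal F_{\mathcal I,\boldsymbol\beta}(\mathbf L,\Delta)$ define
\[
g(y):=\frac{1}{\Delta}f(sy),\qquad\text{so}\qquad g^{(r)}(y)=\frac{s^r}{\Delta}f^{(r)}(sy).
\]

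\textbf{Step 1: the map is a bijection of classes.} For $r\in\mathcal I$ the H\"older seminorm scales as
\[
[g^{(r)}]_{\beta_r}=\frac{s^{r+\beta_r}}{\Delta}[f^{(r)}]_{\beta_r}\le\frac{L_rs^{\nu_r}}{\Delta}=\widehat L_r .
\]
In particular $\widehat L_p=L_ps^{\nu_p}/\Delta=1$ by the choice of $s$. We also have $g(0)-g_\star=(f(0)-f_\star)/\Delta\le1$, and $g\in C^p$. The inverse map $f(x)=\Delta g(x/s)$ reverses every inequality, so $f\mapsto g$ is a bijection from $\mathcal F_{\mathcal I,\boldsymbol\beta}(\mathbf L,\Delta)$ onto $\mathcal F_{\mathcal I,\boldsymbol\beta}(\widehat{\mathbf L},1)$.

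\textbf{Step 2: stationarity transfers.} We have $|g'(y)|=\frac{s}{\Delta}|f'(sy)|$. Moreover
\[
\frac{s}{\Delta}=\frac{\Delta^{1/\nu_p-1}}{L_p^{1/\nu_p}}=\frac{1}{L_p^{1/\nu_p}\Delta^{(\nu_p-1)/\nu_p}} .
\]
Hence $x=sy$ is $\varepsilon$-stationary for $f$ if and only if $y$ is $\widehat\varepsilon$-stationary for $g$, where $\widehat\varepsilon=\varepsilon s/\Delta$ is exactly the accuracy in the statement.

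\textbf{Step 3: transfer of algorithms.} Given an algorithm $A$ for the original problem, build $\widehat A$ for the normalized one with the same random seed. Whenever $A$ wishes to query $x$, the algorithm $\widehat A$ queries $y=x/s$. It then converts the answer coordinatewise, $f^{(r)}(x)=\Delta s^{-r}g^{(r)}(y)$ for $r=0,\ldots,p$ as needed, and feeds the result to $A$. At the end $\widehat A$ outputs $\widehat x_A/s$.

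This conversion is the one point that needs care, and it is also the main (mild) obstacle: it must stay inside the same oracle model. It does, because each oracle component is rescaled by a fixed known constant, and so no extra information is required; this holds for all of $\mathcal O^0,\mathcal O^1,\mathcal O^{0:1},\mathcal O^{1:p},\mathcal O^{0:p}$.

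\textbf{Step 4: conclusion.} By Step 1, running $\widehat A$ on $g$ simulates $A$ on the corresponding $f$ run for run. The query counts are identical, and by Step 2 the success events coincide for every seed. So the worst-case number of queries matches, and so does the success probability: certainty in the deterministic case, at least $1/2$ in the randomized case. This gives $\le$ in the claimed identity. The symmetric construction using the inverse map gives $\ge$, and the convention $+\infty$ is handled automatically, since the correspondence preserves the existence of finite-$N$ algorithms.
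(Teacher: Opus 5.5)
Your proposal is correct and follows the paper's proof essentially step for step: the same rescaling $g(y)=f(sy)/\Delta$ with $s=(\Delta/L_p)^{1/\nu_p}$, the same bijection between $\mathcal F_{\mathcal I,\boldsymbol\beta}(\mathbf L,\Delta)$ and $\mathcal F_{\mathcal I,\boldsymbol\beta}(\widehat{\mathbf L},1)$, the same transfer of stationarity at accuracy $s\varepsilon/\Delta$, and the same one-query-for-one-query simulation in both directions. One small point: the transfer in your Step 3 shows that the normalized complexity is at most the original one, which is the $\ge$ half of the identity as written rather than the $\le$ half; since you prove both directions, this does not affect the argument.
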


\section{Lower Bound for Randomized Derivative-Only Algorithms}
\label{sec:randomized-lower-bound}
In this section, we show that randomized derivative-only algorithms require
$\Omega(\varepsilon^{-1})$ queries.
The bound holds even when H\"older conditions are imposed at several orders
simultaneously, and even when each query returns all derivatives up to
order $p$.

\begin{theorem}[\bf Randomized derivative-only lower bound]
\label{thm:randomized-derivative-lower-bound}
There exists a constant $c_p>0$, depending only on $p$, such that
for every
$0<\varepsilon\le
c_p\min_{r\in\mathcal I}
L_r^{\frac{1}{\nu_r}}
\Delta^{\frac{\nu_r-1}{\nu_r}}$,
\[
\mathcal C_{\mathrm{rand}}^{(\mathcal I,\boldsymbol\beta)}
(\varepsilon;\mathbf L,\Delta,\mathcal O^{1:p})
\ge
c_p
\min_{r\in\mathcal I}
L_r^{\frac{1}{\nu_r}}
\Delta^{\frac{\nu_r-1}{\nu_r}}
\varepsilon^{-1}.
\]
\end{theorem}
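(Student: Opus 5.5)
Via Yao's principle, it suffices to build a distribution over hard functions in $\mathcal F_{\mathcal I,\boldsymbol\beta}(\mathbf L,\Delta)$ on which every deterministic algorithm making fewer than $N\asymp \min_{r\in\mathcal I}L_r^{1/\nu_r}\Delta^{(\nu_r-1)/\nu_r}\varepsilon^{-1}$ queries to $\mathcal O^{1:p}$ fails with probability greater than $1/2$. By Lemma~\ref{lmm:normal} we may take $L_p=\Delta=1$, but it is just as easy to keep all constants and pick one length scale. The instances follow the one-dimensional $C^{1,1}$ construction of \citep{chewi2023complexity}, upgraded to higher smoothness. Fix a smooth bump $\phi\in C^\infty(\mathbb R)$ supported in $[-1,1]$, with $\phi'\ge0$ on $[-1,0]$ and $\phi'(x)=1$ on a middle portion of that interval; for instance, let $\phi'$ be a smooth plateau function on $[-1,0]$ and its antisymmetric reflection on $[0,1]$, so that $\phi$ is a compactly supported bump. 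Define the base function $g(x)=-2\varepsilon x$ on an interval $[0,T]$, with smooth transitions outside. Its derivative is $-2\varepsilon$ on $[0,T]$, so no point there is stationary, and its total decrease is $2\varepsilon T$. We need $2\varepsilon T\le\Delta/2$, i.e.\ $T\asymp\Delta/\varepsilon$.

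For a hidden location $u\in[0,T]$, set $f_u(x)=g(x)+2\varepsilon\,\mathbf 1_{x\ge u}$ smoothed over width $\delta$. Concretely, $f_u'(x)=-2\varepsilon+2\varepsilon\,\psi((x-u)/\delta)$, where $\psi\ge0$ is a smooth bump with $\int\psi=\delta^{-1}\cdot$(normalization) and $\max\psi=1$. This makes $f_u'=0$ near $u$, and the only $\varepsilon$-stationary points of $f_u$ lie in $[u-\delta,u+\delta]$. At the same time $f_u^{(k)}=g^{(k)}$ for every $1\le k\le p$ outside that window, so any query of $\mathcal O^{1:p}$ made outside $[u-\delta,u+\delta]$ returns exactly the responses of $g$. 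Hence a deterministic algorithm run on $f_u$ follows the same query trajectory as on $g$ until some query lands in the window. Since $\psi$ integrates to $O(\delta)$, the function-value gap of $f_u$ is only perturbed by $O(\varepsilon\delta)$, which keeps $f_u(0)-f_\star\le\Delta$.

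Next, calibrate $\delta$ so that $f_u$ satisfies each Hölder constraint. We have $[f_u^{(r)}]_{\beta_r}\lesssim_p \varepsilon\,\delta^{-(r-1)-\beta_r}=\varepsilon\delta^{1-\nu_r}$, plus the contribution of $g$'s transitions, which are done at scale $T$ and are therefore even smaller. Requiring $\varepsilon\delta^{1-\nu_r}\le c L_r$ for all $r\in\mathcal I$ gives
\[
\delta=\max_{r\in\mathcal I}(C_p\varepsilon/L_r)^{1/(\nu_r-1)}.
\]
The assumption $\varepsilon\le c_p\min_r L_r^{1/\nu_r}\Delta^{(\nu_r-1)/\nu_r}$ ensures $\delta\le T$. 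If some $\nu_r=1$, i.e.\ $r=1$ and $\beta_1=0$, is excluded since $\beta_r>0$, so there is no problem there.

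Finally, draw $u$ uniformly from $[0,T]$. A deterministic algorithm's $N$ queries, made on the trajectory of $g$ together with its output point, form a fixed set of $N+1$ points. The algorithm succeeds only if some query or the output falls within distance $\delta$ of $u$. This has probability at most $2\delta(N+1)/T$, so success probability $\ge1/2$ forces $N\gtrsim T/\delta$. This yields the bound once we check that $T/\delta=\min_r\Delta\varepsilon^{-1}(L_r/C_p\varepsilon)^{1/(\nu_r-1)}$ dominates $\min_r L_r^{1/\nu_r}\Delta^{(\nu_r-1)/\nu_r}\varepsilon^{-1}$, which again uses the upper bound on $\varepsilon$. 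The main obstacle is this exponent bookkeeping. The naive ratio $T/\delta$ scales as $\varepsilon^{-\nu_r/(\nu_r-1)}$, which is larger than needed, so the correct move is to choose $T$ smaller, with total budget, to match the claimed rate. One must then verify that the smoothing of $g$'s ends and the bump stay within every $L_r$ simultaneously.
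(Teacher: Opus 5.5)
Your hard family does not work: the claim that every $\varepsilon$-stationary point of $f_u$ lies in $[u-\delta,u+\delta]$ is false. Admissibility requires $f_\star>-\infty$, so $f_u'$ cannot stay $\le-\varepsilon$ on all of $[T,\infty)$; otherwise $f_u(x)\le f_u(T)-\varepsilon(x-T)\to-\infty$. The ``smooth transition'' of $g$ to the right of $[0,T]$ must therefore bring $g'$ continuously from $-2\varepsilon$ to a value above $-\varepsilon$, so it passes through a point $y$ with $|g'(y)|<\varepsilon$. Your bump is only an $O(\varepsilon\delta)$ perturbation of the fixed function $g$ inside $[0,T]$, so $y$ is the same for every $u$. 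An algorithm that outputs $y$ with zero queries therefore succeeds on every instance, and shrinking $T$ does not change this. The symptom you flagged is the warning sign. Your count $T/\delta\asymp\varepsilon^{-\nu_r/(\nu_r-1)}$ would contradict the $O(\varepsilon^{-1})$ randomized upper bound of Theorem~\ref{thm:randomized-derivative-upper-bound}, so it points to an invalid construction, not an exponent-bookkeeping issue.

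The missing idea is that the hidden feature itself must pay back the descent, so the instances can be extended without any common stationary points. In normalized units the paper takes $f_j(x)=\bigl(1-\delta x+\eta_p(x-j+1)\bigr)/K$ on $[0,m]$, with $\delta=1/m$ and $m\asymp1/\varepsilon$. It extends $f_j$ periodically with period $m$, and linearly for $x<0$. The polynomial step $\eta_p$ raises $f_j$ by the full amount $1/K$ lost over each period, so $f_j$ is bounded below. Outside the hidden unit intervals $H_j$, one has $f_j'=-\delta/K\le-2\varepsilon$ and all higher derivatives vanish, so stationary points occur only inside $H_j$. Because this step has height of order $\Delta$, the H\"older constraints force its width to be of order $b=\max_r(\Delta/L_r)^{1/\nu_r}$ (via $F_j(x)=\Delta f_j(x/b)$), not your $\max_r(\varepsilon/L_r)^{1/(\nu_r-1)}$. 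The number of disjoint candidate windows is then $(\Delta/\varepsilon)/b\asymp\min_r L_r^{1/\nu_r}\Delta^{(\nu_r-1)/\nu_r}\varepsilon^{-1}$, exactly the target rate. With this family, your counting step goes through as in the paper: follow the common-response trajectory, note that at most $N$ windows are touched plus one output, and average over the instances.
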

The minimum selects the most restrictive of the H\"older conditions.
Equivalently, one may take the minimum over all $1\le r\le p$ with the
convention $L_r=+\infty$ for $r\notin\mathcal I$, since such terms do not
affect the minimum.
The exponent of $\varepsilon$ does not depend on $p$ or
$\boldsymbol\beta$, and the bound holds even though each query returns all
derivatives up to order $p$.
Theorem~\ref{thm:randomized-derivative-upper-bound} gives a matching upper
bound that queries $f'$ only.

\subsection{Hard Family}
The construction hides the stationary points in one of
$m=\Theta_p(\varepsilon^{-1})$ candidate regions.
Outside the hidden region, all instances return the same derivatives.
A query can therefore distinguish at most one previously untested
candidate from the others.
The final output can identify at most one additional candidate,
even if it was never queried.
Consequently, an algorithm using $N$ queries has average success
probability at most $(N+1)/m$.
The regularity constraints are enforced by a fixed polynomial
transition whose derivatives match at the boundaries.

We construct instances in
$\mathcal F_{\mathcal I,\boldsymbol\beta}(\mathbf1,1)$.
Define $\eta_p(t):=
\frac{\int_0^t u^p(1-u)^p\,du}
     {\int_0^1 u^p(1-u)^p\,du},
0\le t\le1,$
and set
$K:=2\max\{1,\max_{1\le r\le p+1}
\|\eta_p^{(r)}\|_{\infty,[0,1]}\}$.
Extend $\eta_p$ by $0$ for $t<0$ and by $1$ for $t>1$.
Then $\eta_p\in C^p(\mathbb R)$, $0\le\eta_p\le1$, and
$\eta_p^{(r)}(0)=\eta_p^{(r)}(1)=0$ for $1\le r\le p$.

For $0<\varepsilon\le(8K)^{-1}$, let
$m:=\lfloor(2K\varepsilon)^{-1}\rfloor$ and $\delta:=1/m$,
so that $m\ge4$ and $\delta\ge2K\varepsilon$.
For each $j\in\{1,\ldots,m\}$, define
\[
f_j(x):=\frac{1-\delta x+\eta_p(x-j+1)}{K},
\qquad 0\le x\le m.
\]
Extend $f_j$ periodically to $[0,\infty)$ with period $m$,
and set $f_j(x):=(1-\delta x)/K$ for $x<0$.
Define the hidden region
\[
H_j:=\bigcup_{k\ge0}(km+j-1,km+j).
\]
Outside $H_j$, we have
$f_j'=-\delta/K\le-2\varepsilon$ and
$f_j^{(r)}=0$ for $2\le r\le p$.
Thus every $\varepsilon$-stationary point of $f_j$ lies in $H_j$,
and all derivative responses outside the hidden region
are independent of $j$.

Figure~\ref{fig:hard-family-local} illustrates $f_j$ and its
derivative near the hidden interval $(j-1,j)$.
The marked points are stationary points of $f_j$; outside $H_j$,
the derivative is constant at $-\delta/K$.

\begin{figure}[t]
    \centering
    \includegraphics[width=\textwidth]{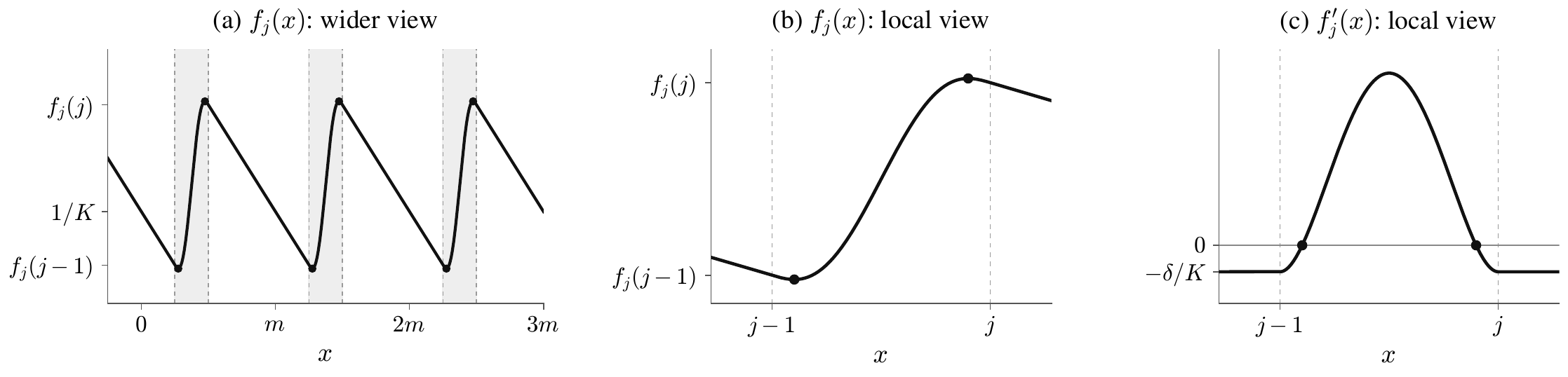}
    \vspace{-0.8cm}
    \caption{The hard instance $f_j$ on a wider interval (left), a local view
near the hidden interval $(j-1,j)$ (middle), and its derivative
near the same interval (right), illustrated for
$p=2$, $m=4$, and $j=2$.
The the gray shaded regions mark the hidden intervals.
Dashed lines mark the interval endpoints.
Dots mark the stationary points and the corresponding
derivative zeros. Outside $H_j$, $f_j'=-\delta/K$.}
    \label{fig:hard-family-local}
        \vspace{-0.8cm}
\end{figure}

\subsection{Properties and Proof Sketch}
The following lemma summarizes the key properties of the hard family.
Its proof is deferred to Appendix~\ref{app:2}.

\begin{lemma}[\bf Properties of the hard family]
\label{lem:hard-family-properties}
For $0<\varepsilon\le(8K)^{-1}$, the preceding construction
satisfies $m\ge4$ and $m\ge(4K\varepsilon)^{-1}$.
Moreover:
\begin{enumerate}[leftmargin = *,itemsep=2pt, parsep=0pt, topsep=3pt]
    \item \textbf{Regularity and initial gap.}
    For every $j\in[m]$,
    $f_j\in\mathcal F_{\mathcal I,\boldsymbol\beta}(\mathbf1,1)$.

    \item \textbf{Indistinguishability.}
    For every $j\in[m]$ and $x\notin H_j$,
    $\mathcal O_{f_j}^{1:p}(x)=(-\delta/K,0,\ldots,0)$,
    where $\delta/K\ge2\varepsilon$.

    \item \textbf{Localization of stationary points.}
    The sets $H_1,\ldots,H_m$ are pairwise disjoint.
    Every $\varepsilon$-stationary point of $f_j$ belongs to $H_j$,
    and each connected component of $H_j$ contains a stationary point.
\end{enumerate}
\end{lemma}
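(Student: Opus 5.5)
The plan is to verify each claim directly from the explicit formula, since $f_j$ is built from the single transition profile $\eta_p$. I will start with the arithmetic claims about $m$ and $\delta$. Since $\varepsilon\le(8K)^{-1}$, we have $(2K\varepsilon)^{-1}\ge4$, so $m\ge4$. Next, $m\ge(2K\varepsilon)^{-1}-1\ge(4K\varepsilon)^{-1}$, because $(4K\varepsilon)^{-1}\ge2\ge1$. Finally, $m\le(2K\varepsilon)^{-1}$ gives $\delta=1/m\ge2K\varepsilon$, and in particular $\delta\le1/4$.

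\textbf{Smoothness and gluing.} Before checking the H\"older bounds, I will show that $f_j\in C^p(\mathbb R)$ and compute its derivatives.
\begin{itemize}
\item On $[0,m]$, $f_j'(x)=(-\delta+\eta_p'(x-j+1))/K$ and $f_j^{(r)}(x)=\eta_p^{(r)}(x-j+1)/K$ for $2\le r\le p$. These are supported on $(j-1,j)$ for $r\ge2$.
\item At the period boundaries, $f_j(m)=(1-\delta m+1)/K=1/K=f_j(0)$.
\item For $x<0$ the function $(1-\delta x)/K$ has first derivative $-\delta/K$ and vanishing higher derivatives. These match the one-sided derivatives at $0$ and at every $km$, because $\eta_p^{(r)}(0)=\eta_p^{(r)}(1)=0$ for $1\le r\le p$.
\end{itemize}
Hence the periodic extension is $C^p$. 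Each $f_j^{(r)}$ with $1\le r\le p$ is globally Lipschitz: it is piecewise polynomial and continuous, and its a.e. derivative is bounded by $\|\eta_p^{(r+1)}\|_\infty/K\le1/2$.

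\textbf{H\"older bound.} I expect the main obstacle to be converting this into $[f_j^{(r)}]_{\beta_r}\le1$ for an arbitrary $\beta_r\in(0,1]$, including pairs of points in different periods. I will split into two cases.
\begin{itemize}
\item If $|x-y|\le1$, the Lipschitz bound gives $|f_j^{(r)}(x)-f_j^{(r)}(y)|\le|x-y|/2\le|x-y|^{\beta_r}$.
\item If $|x-y|>1$, note that $f_j^{(r)}$ takes values in $[-\delta/K,(-\delta+\|\eta_p'\|_\infty)/K]$ for $r=1$, and in $[-\|\eta_p^{(r)}\|_\infty/K,\|\eta_p^{(r)}\|_\infty/K]$ for $r\ge2$. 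In both cases the oscillation is at most $2\|\eta_p^{(r)}\|_\infty/K\le1\le|x-y|^{\beta_r}$.
\end{itemize}
For the gap, $f_j(0)=(1+\eta_p(1-j))/K=1/K$. On $[0,m]$ we have $f_j\ge(1-\delta x)/K\ge0$, so periodicity gives $f_j\ge0$ on $[0,\infty)$, and $f_j\ge1/K$ for $x<0$. Thus $f_\star\ge0>-\infty$ and $f_j(0)-f_\star\le1/K\le1/2$.

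\textbf{Indistinguishability and localization.} Indistinguishability follows immediately from the derivative formulas: off $H_j$ the response is $(-\delta/K,0,\ldots,0)$ with $\delta/K\ge2\varepsilon$. The sets $H_j$ are pairwise disjoint because their components are open unit intervals whose left endpoints have distinct residues $j-1$ modulo $m$. Every $\varepsilon$-stationary point lies in $H_j$, since $|f_j'|=\delta/K\ge2\varepsilon$ off $H_j$.

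For a component $I=(km+j-1,km+j)$, I will combine two facts. First, $\int_I f_j'=(-\delta+1)/K>0$, because $\delta\le1/4$. Second, $f_j'<0$ at both endpoints. Hence $f_j'$ is positive somewhere in $I$, and by continuity and the intermediate value theorem it vanishes at an interior point of $I$.
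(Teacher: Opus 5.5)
Your proposal is correct and follows essentially the same route as the paper: the same bounds on $m$ and $\delta$, $C^p$ gluing via the vanishing endpoint derivatives of $\eta_p$, the H\"older bound from the Lipschitz estimate for $|x-y|\le1$ and the oscillation bound for $|x-y|>1$ (the paper's $\min\{|x-y|/2,1\}\le|x-y|^{\beta_r}$), and the integral-plus-endpoint-sign argument giving a zero of $f_j'$ in each component. The only cosmetic difference is that you bound $m$ via $\lfloor t\rfloor\ge t-1$, whereas the paper uses $\lfloor t\rfloor\ge t/2$ for $t\ge2$.
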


Property~2 is the reason higher-order derivatives do not help.
Outside $H_j$, the derivatives of orders $2,\ldots,p$ all vanish, and inside $H_j$ the first derivative alone already
distinguishes $f_j$.

\begin{proof}[Proof sketch of Theorem~\ref{thm:randomized-derivative-lower-bound}]Let $S:=\min_{r\in\mathcal I}
L_r^{\frac{1}{r+\beta_r}}\Delta^{\frac{r+\beta_r-1}{r+\beta_r}}$.
Rescaling Lemma~\ref{lem:hard-family-properties} gives
$m=\Omega_p(S/\varepsilon)$ instances with disjoint stationary
regions and a common oracle response outside the hidden regions.

Fix a random seed and consider the common-response execution.
For $N<m$, at most $N$ regions are queried.
All untouched instances produce the same output, which succeeds
on at most one of them.
Averaging over the seed therefore gives
\[
\frac12
\le \frac1m\sum_{j=1}^m
\mathbb P\!\left(
|F_j'(\widehat x_A(F_j))|<\varepsilon
\right)
\le \frac{N+1}{m}.
\]
Hence $N\ge m/2-1=\Omega_p(S/\varepsilon)$.
The bound holds for $\mathcal O^{1:p}$ and thus for $\mathcal O^1$.
\end{proof}

\section{Lower Bounds for Deterministic Derivative-Only Algorithms}
\label{sec:4}

We next establish a deterministic lower bound under simultaneous
H\"older smoothness conditions.
We allow each oracle query at $x$ to return
$\mathcal O_f^{1:p}(x):=(f'(x),\ldots,f^{(p)}(x))$.

\begin{theorem}[\bf Deterministic derivative-only lower bound]
\label{thm:deterministic-derivative-jet-lower}
There exists a constant $c_{p,\boldsymbol\beta}>0$,
depending only on $p$ and $\boldsymbol\beta$, such that,
for every $L_1,\ldots,L_p,\Delta>0$ and
$0<\varepsilon\le c_{p,\boldsymbol\beta}
\min_{1\le r\le p}
L_r^{\frac{1}{\nu_r}}\Delta^{\frac{\nu_r-1}{\nu_r}}$,
\[
\mathcal C_{\mathrm{det}}^{(\mathcal I,\boldsymbol\beta)}
(\varepsilon;\mathbf L,\Delta,\mathcal O^{1:p})
\ge c_{p,\boldsymbol\beta}
\min_{1\le r\le p}
L_r^{\frac{1}{\nu_r-1}}\Delta
\varepsilon^{-\frac{\nu_r}{\nu_r-1}}.
\]
\end{theorem}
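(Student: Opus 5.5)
We use a resisting-oracle argument. The adversary always answers $\mathcal O^{1:p}_f(x)=(-2\varepsilon,0,\ldots,0)$. Afterwards it builds an $f$ in the class that is consistent with every answer and has $|f'|=2\varepsilon$ at the output point. Because the algorithm is deterministic, fixing these answers fixes the whole transcript: the query points $x_1,\ldots,x_N$ and the output $\widehat x$ are then determined, and we set $Q:=\{x_1,\ldots,x_N,\widehat x\}$.

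\textbf{Step 1: choose the cell length.} Fix a $C^\infty$ bump $\psi\ge0$ supported in $(0,1)$ with $\int\psi=1$. Put $A_r:=\max_{1\le k\le p}\|\psi^{(k)}\|_\infty$, which bounds the Hölder-$\beta_r$ seminorm of $\psi^{(r-1)}$. Set
\[
\ell:=\max_{1\le r\le p}\Big(\tfrac{C\varepsilon}{L_r}\Big)^{\frac{1}{\nu_r-1}}
\]
for a constant $C=C_{p,\boldsymbol\beta}$ to be fixed. For a cell $[a,a+\ell]$ define
\[
g(x)=-2\varepsilon+\tfrac{2\varepsilon}{1}\,\psi\big(\tfrac{x-a}{\ell}\big).
\]
Integrating $g$ over the cell gives $-2\varepsilon\ell+2\varepsilon\ell=0$. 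The derivatives satisfy $g^{(r-1)}=2\varepsilon\ell^{-(r-1)}\psi^{(r-1)}(\cdot/\ell)$. Its $\beta_r$-Hölder seminorm is therefore at most $2A\varepsilon\,\ell^{-(\nu_r-1)}$, which is $\le L_r$ by the choice of $\ell$ once $C\ge 2A$. Every derivative of $g-(-2\varepsilon)$ vanishes at the cell endpoints, so such cells glue in a $C^p$ way to linear pieces of slope $-2\varepsilon$. Pieces glued this way keep all seminorms within a factor $2$; this factor is absorbed by enlarging $C$.

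\textbf{Step 2: build the function.} Partition $[0,\infty)$ into consecutive cells $I_k=[k\ell,(k+1)\ell)$.
\begin{itemize}
\item On a cell meeting $Q$ (call it \emph{occupied}), take $f'\equiv-2\varepsilon$.
\item On every other cell (\emph{free}), take $f'=g$ as in Step 1.
\item For $x<0$, take $f'\equiv-2\varepsilon$.
\end{itemize}
Then $f$ increases to the left of $0$ and is nonincreasing only across occupied cells, since each free cell has net change zero. Hence
\[
f(0)-f_\star\le 2\varepsilon\ell\,|\{\text{occupied cells}\}|\le 2\varepsilon\ell(N+1).
\]
Every point of $Q$ lies in an occupied cell. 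There $f'=-2\varepsilon$ and all higher derivatives vanish, so every oracle answer is consistent with $f$. In particular $|f'(\widehat x)|=2\varepsilon\ge\varepsilon$, so the algorithm fails on $f$.

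\textbf{Step 3: conclude.} The algorithm can be correct only if the constructed $f$ lies outside the class, which forces $2\varepsilon\ell(N+1)>\Delta$. Writing $\ell=\max_r (C\varepsilon/L_r)^{1/(\nu_r-1)}$, this gives
\[
N+1>\frac{\Delta}{2\varepsilon\ell}=\min_r \frac{\Delta}{2C^{1/(\nu_r-1)}}\,L_r^{\frac1{\nu_r-1}}\varepsilon^{-\frac{\nu_r}{\nu_r-1}},
\]
which is the claimed bound. The hypothesis $\varepsilon\le c\min_r L_r^{1/\nu_r}\Delta^{(\nu_r-1)/\nu_r}$ is exactly what makes this quantity at least $2$. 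That lets us drop the $+1$, and it also guarantees that a single cell already fits within the budget, so the statement is not vacuous. Alternatively, one can first apply Lemma~\ref{lmm:normal} to normalize $L_p=\Delta=1$ and carry out the computation there.

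\textbf{Main obstacle.} The delicate part is Step 1: all Hölder conditions must hold simultaneously and globally. This includes pairs of points lying in different cells, and the glue points between a bump cell and a linear cell. To handle it, I would bound the $\beta_r$-seminorm of $f^{(r)}$, which equals $(f'-(-2\varepsilon))^{(r-1)}$, by splitting into two cases. If $|x-y|\le\ell$, use the mean-value bound $\|\psi^{(r)}\|_\infty\ell^{-r}|x-y|$ together with $|x-y|^{1-\beta_r}\le \ell^{1-\beta_r}$. Otherwise, use the sup bound $2\|\psi^{(r-1)}\|_\infty\ell^{-(r-1)}\le \text{const}\cdot\ell^{-(r-1)-\beta_r}|x-y|^{\beta_r}$. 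Both cases give the constant $\varepsilon\,\ell^{-(\nu_r-1)}$ times a $p$-dependent factor.
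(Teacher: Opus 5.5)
Your proof is correct and follows the paper's argument. It uses the same constant-jet adversary $(-2\varepsilon,0,\ldots,0)$, the same scale $\ell\asymp\max_r(\varepsilon/L_r)^{1/(\nu_r-1)}$, the same $\min\{2\|f^{(r)}\|_\infty,\|f^{(r+1)}\|_\infty|x-y|\}$ H\"older estimate, and a gap of order $\varepsilon\ell$ per query point. The only difference is the filler away from the queries: you keep slope $-2\varepsilon$ everywhere and cancel it with zero-net bumps on unqueried grid cells. The paper instead takes $f_S'=-2\varepsilon\,\mathbf 1_{E_S}*\varphi_h$, which makes $f_S$ monotone and flat away from the queries, so its gap bound $4\varepsilon h|S|$ is immediate.

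Two small fixes are needed. First, inside a free cell $f$ drops by up to $2\varepsilon\ell$ before recovering, so your gap bound should read $2\varepsilon\ell(\#\text{occupied}+1)\le 2\varepsilon\ell(N+2)$. Second, you should treat a run that exceeds its budget $N_0$ on the simulated answers. Build $f$ from its first $N_0+1$ queries, as the paper does.
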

The bound can be written as
$\min_{1\le r\le p}\bigl(L_r^{\frac{1}{\nu_r}}
\Delta^{\frac{\nu_r-1}{\nu_r}}\varepsilon^{-1}\bigr)^{\frac{\nu_r}{\nu_r-1}}$,
that is, the randomized bound of
Theorem~\ref{thm:randomized-derivative-lower-bound} for the condition of
order $r$, raised to the power $\frac{\nu_r}{\nu_r-1}>1$.
This power decreases toward $1$ as $\nu_r$ grows, so the gap between
deterministic and randomized algorithms narrows with the order of
smoothness.
The hard instances are $C^\infty$, and
Theorem~\ref{thm:randomized-derivative-upper-bound} gives a matching upper
bound that queries $f'$ only.

\subsection{Hard Family}
Fix a nonnegative kernel $\varphi\in C_c^\infty(\mathbb R)$
with $\operatorname{supp}\varphi\subset[-\frac12,\frac12]$
and $\int_{\mathbb R}\varphi=1$.\footnote{For example, take
$\varphi(t):=
\begin{cases}
c_\varphi\exp\!\left(-\frac{1}{1-4t^2}\right),
    & |t|<\frac12,\\
0,  & \text{otherwise}.
\end{cases}$, where
$c_\varphi^{-1}:=\int_{-\frac12}^{\frac12}
\exp(-\frac{1}{1-4u^2})\,du$.}
Let $K_{p,\boldsymbol\beta}\ge1$ be sufficiently large,
depending only on $p$ and $\boldsymbol\beta$, and set
$h:=K_{p,\boldsymbol\beta}\max\limits_{1\le r\le p}
(\frac{\varepsilon}{L_r})^{\frac{1}{\nu_r-1}}$
and $\varphi_h(t):=\frac1h\varphi(\frac th)$.
For each finite $S\subset\mathbb R$ satisfying
$0\in S$ and $4\varepsilon h|S|\le\Delta$, define
$E_S:=\bigcup_{s\in S}[s-h,s+h]$,
$\psi_S:=\mathbf1_{E_S}*\varphi_h$, and
\[
f_S(x):=\Delta-2\varepsilon\int_0^x\psi_S(t)\,dt.
\]
Figure~\ref{fig:derivative-hard-family} illustrates the construction
for $S=\{0,x_1,x_2,x_3,\widehat x\}$.

\begin{figure}[ht]
\centering
\includegraphics[width=0.8\linewidth]{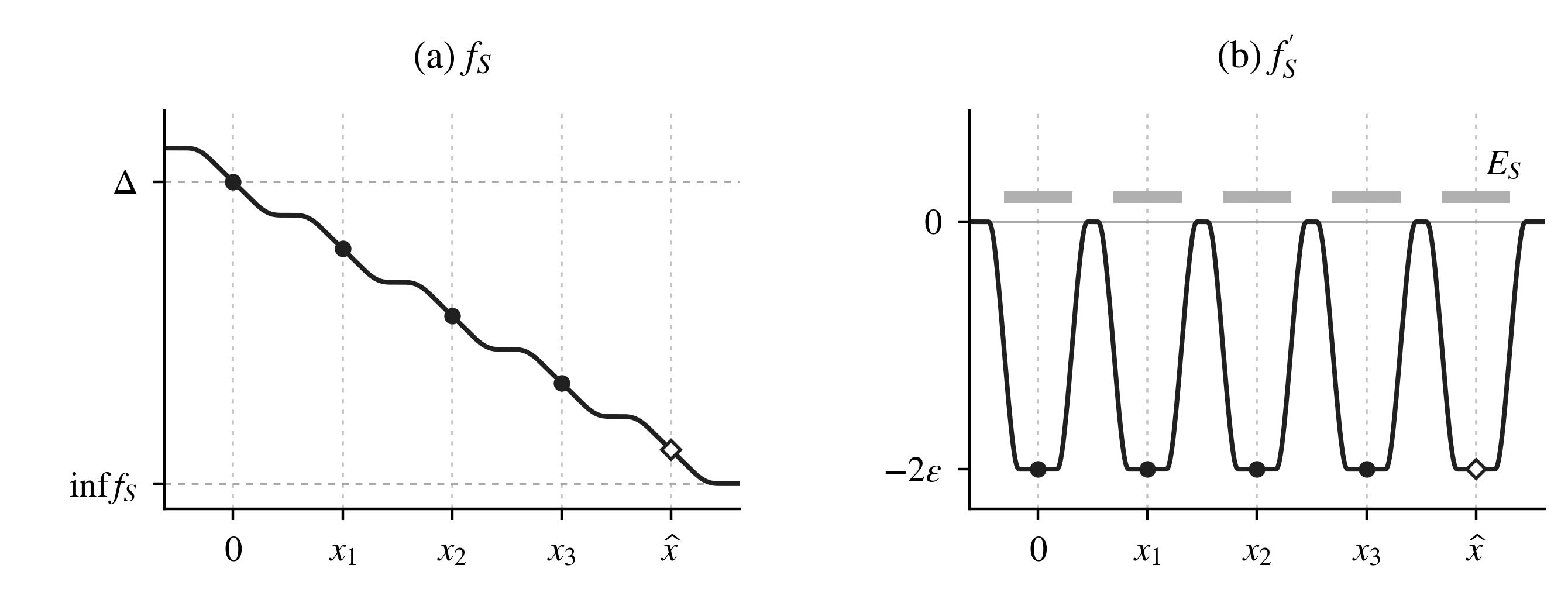}
\vspace{-0.3cm}
\caption{A member of the hard family: (a) $f_S$ and (b) $f_S'$.
Gray bars indicate $E_S$, filled circles mark $0,x_1,x_2,x_3$,
and open diamonds mark $\widehat x$.}
\label{fig:derivative-hard-family}
\vspace{-0.6cm}
\end{figure}
\subsection{Properties and Proof Sketch}
The following lemma records the regularity, initial gap, and oracle values
of the hard family. Its proof is deferred to
Appendix~\ref{app:hard-family-properties}.
\begin{lemma}[\bf Properties of the hard family]
\label{lmm:derivative-hard-family}
For $K_{p,\boldsymbol\beta}$ sufficiently large, every function $f_S$
constructed above satisfies the following properties.
\begin{enumerate}[leftmargin = *,itemsep=2pt, parsep=0pt, topsep=3pt]
    \item \textbf{Regularity and initial gap.}
    The function $f_S\in C^\infty(\mathbb R)$ is nonincreasing and
    bounded below, with $f_S(0)=\Delta$.
    Moreover, $[f_S^{(r)}]_{\beta_r}\le L_r$ for every $1\le r\le p$,
    and $f_S(0)-\inf f_S\le4\varepsilon h|S|\le\Delta$.

    \item \textbf{Oracle values.}
    For every $s\in S$ and $|x-s|<\frac h2$,
    $f_S'(x)=-2\varepsilon$ and
    $f_S^{(j)}(x)=0$ for $2\le j\le p$.
\end{enumerate}
\end{lemma}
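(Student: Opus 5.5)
The plan is to derive everything from the identity $f_S'=-2\varepsilon\psi_S$ with $\psi_S=\mathbf 1_{E_S}*\varphi_h$. Smoothness, monotonicity, the initial value and the oracle values then follow almost directly. The Hölder bounds are the only part where the choice of $h$ matters.

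\textbf{Structure, monotonicity and gap.}
\begin{itemize}
\item Since $\varphi_h\in C_c^\infty$, the convolution $\psi_S$ is $C^\infty$. Because $\varphi_h\ge0$ has integral $1$, we get $0\le\psi_S\le1$.
\item Hence $f_S\in C^\infty$, $f_S'=-2\varepsilon\psi_S\le0$, and $f_S(0)=\Delta$.
\item The support of $\psi_S$ lies in $E_S+[-h/2,h/2]$. This set has total length at most $|S|(2h+h)\le 2h|S|\cdot\frac32$.
\item Therefore, for every $x$, $f_S(0)-f_S(x)\le 2\varepsilon\int_{\mathbb R}\psi_S\le 2\varepsilon\cdot 2h|S|=4\varepsilon h|S|\le\Delta$. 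Here $\int\psi_S=|E_S|\le 2h|S|$, since convolution with a probability kernel preserves mass.
\item This holds for $x<0$ as well: with the sign convention, $f_S(x)=\Delta+2\varepsilon\int_x^0\psi_S\ge\Delta$.
\item So $f_S$ is bounded below and the gap bound holds.
\end{itemize}

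\textbf{Oracle values.} Take $|x-s|<h/2$. Then the window $[x-h/2,x+h/2]$, which contains $\operatorname{supp}\varphi_h(x-\cdot)$, lies inside $[s-h,s+h]\subset E_S$. So $\psi_S\equiv1$ on the open interval $(s-h/2,s+h/2)$. Consequently $f_S'=-2\varepsilon$ there, and all higher derivatives vanish on this open set.

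\textbf{Hölder bounds.}
\begin{itemize}
\item For $r\ge1$ we have $f_S^{(r)}=-2\varepsilon\,\psi_S^{(r-1)}$, and $\psi_S^{(k)}=\mathbf 1_{E_S}*\varphi_h^{(k)}$.
\item Using $\|\varphi_h^{(k)}\|_{L^1}=h^{-k}\|\varphi^{(k)}\|_{L^1}$, this gives $\|\psi_S^{(k)}\|_\infty\le C_k h^{-k}$, with $C_0=1$.
\item Interpolate: for $|x-y|\le h$ use the mean value theorem, and for $|x-y|>h$ use twice the sup norm. This yields $[\psi_S^{(r-1)}]_{\beta_r}\le C'_r h^{-(r-1)-\beta_r}$, where $C'_r$ depends only on $r$ through $\varphi$.
\item Hence $[f_S^{(r)}]_{\beta_r}\le 2C'_r\,\varepsilon\, h^{-(\nu_r-1)}$.
\item By definition of $h$, we have $h\ge K_{p,\boldsymbol\beta}(\varepsilon/L_r)^{1/(\nu_r-1)}$. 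So $h^{-(\nu_r-1)}\le K_{p,\boldsymbol\beta}^{-(\nu_r-1)}L_r/\varepsilon$, and therefore $[f_S^{(r)}]_{\beta_r}\le 2C'_rK_{p,\boldsymbol\beta}^{-(\nu_r-1)}L_r$.
\item Since $\nu_r-1\ge\beta_1>0$, choosing $K_{p,\boldsymbol\beta}\ge\max_r(2C'_r)^{1/(\nu_r-1)}$ gives $[f_S^{(r)}]_{\beta_r}\le L_r$ for all $1\le r\le p$. For $r=1$, $\nu_1-1=\beta_1$, so the required $K$ depends on $\boldsymbol\beta$, which the statement permits.
\end{itemize}

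The main obstacle is this last step. The difficulty is bookkeeping rather than depth: one must keep every constant independent of $S$. This works because the derivative bounds on $\psi_S$ come from the $L^1$ norms of the kernel derivatives, which do not see how many intervals $E_S$ contains or whether they overlap.
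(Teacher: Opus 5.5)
Your proposal is correct and follows essentially the same route as the paper's proof. Both use $f_S'=-2\varepsilon\psi_S$, the sup-norm bounds $\|\psi_S^{(k)}\|_\infty\le h^{-k}\|\varphi^{(k)}\|_{L^1}$, and the minimum of the sup-norm and mean-value estimates to get $[f_S^{(r)}]_{\beta_r}\lesssim\varepsilon h^{-(\nu_r-1)}$ before taking $K_{p,\boldsymbol\beta}$ large; the gap then follows from $\int\psi_S=|E_S|\le 2h|S|$, and the oracle values from support containment (your bullet on the length of $E_S+[-h/2,h/2]$ is never used and can be dropped).
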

Property~2 is the reason higher-order derivatives do not help.
Near every point of $S$, the derivatives of orders $2,\ldots,p$ all vanish,
so a query at such a point reveals only $f_S'=-2\varepsilon$, whatever the
set $S$ is.

\begin{proof}[Proof sketch of Theorem~\ref{thm:deterministic-derivative-jet-lower}]
The accuracy assumption ensures $\varepsilon h\le\frac{\Delta}{16}$.
Fix a deterministic algorithm and answer every query with
$f'=-2\varepsilon$ and $f^{(j)}=0$ for $2\le j\le p$.
If it outputs within $Q$ queries, let $S$ contain the origin,
its query locations, and its output.
For $Q<\frac{\Delta}{16\varepsilon h}$,
we have $4\varepsilon h|S|<\Delta$.
Lemma~\ref{lmm:derivative-hard-family} supplies an admissible $f_S$
that reproduces all simulated replies.
Determinism forces the same execution that the output has derivative
$-2\varepsilon$, or the algorithm exceeds $Q$ queries.
Thus the complexity is at least $\frac{\Delta}{16\varepsilon h}$ and
substituting $h$ gives the claimed bound.
The full proof is deferred to
Appendix~\ref{app:deterministic-derivative-lower}.
\end{proof}

\section{Upper Bounds for Derivative-Only Algorithms}
\label{sec:5}
We next establish upper bounds using only derivative queries.
The algorithm combines piecewise polynomial approximation of $f'$
with importance sampling, using the line-integral identity to control
the success probability.
Applying the construction to the most favorable smoothness condition
among $r\in\mathcal I$ yields the same parameter dependence as
the lower bound.
\begin{theorem}[\bf Derivative-only upper bounds]
\label{thm:randomized-derivative-upper-bound}
Let $\nu_r:=r+\beta_r$ for $r\in\mathcal I$.
There exists a constant $C_{\mathcal I,\boldsymbol\beta}>0$,
depending only on $\mathcal I$ and $\boldsymbol\beta$, such that,
for every
$0<\varepsilon\le
\min_{r\in\mathcal I}
L_r^{\frac{1}{\nu_r}}
\Delta^{\frac{\nu_r-1}{\nu_r}}$,
the following bounds hold:
\begin{align*}
\mathcal C_{\mathrm{det}}^{(\mathcal I,\boldsymbol\beta)}
(\varepsilon;\mathbf L,\Delta,\mathcal O^1)
&\le
C_{\mathcal I,\boldsymbol\beta}
\min_{r\in\mathcal I}
L_r^{\frac{1}{\nu_r-1}}\Delta
\varepsilon^{-\frac{\nu_r}{\nu_r-1}},\\
\mathcal C_{\mathrm{rand}}^{(\mathcal I,\boldsymbol\beta)}
(\varepsilon;\mathbf L,\Delta,\mathcal O^1)
&\le
C_{\mathcal I,\boldsymbol\beta}
\min_{r\in\mathcal I}
L_r^{\frac{1}{\nu_r}}
\Delta^{\frac{\nu_r-1}{\nu_r}}
\varepsilon^{-1}.
\end{align*}
\end{theorem}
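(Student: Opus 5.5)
The plan is to fix an arbitrary $r\in\mathcal I$, prove both bounds with $L_r$ and $\nu_r$ in place of the minimum, and then run the algorithm for the minimizing $r$. The constraint $\varepsilon\le L_r^{1/\nu_r}\Delta^{(\nu_r-1)/\nu_r}$ is used only to absorb additive $O(1)$ query terms. By Lemma~\ref{lmm:normal} one could normalize $L_p=\Delta=1$, but it is cleaner to keep $L_r$ and $\Delta$ explicit.

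\textbf{Step 1 (existence via the line integral).} Query $f'(0)$ and reflect $x\mapsto -x$ if necessary so that $f'(0)\le 0$; if $|f'(0)|<\varepsilon$ we stop. Set $D:=3\Delta/\varepsilon$. We claim $[0,D]$ contains a point with $|f'|<\varepsilon/3$. Otherwise, by continuity $f'$ has a constant sign on $[0,D]$, and since $f'(0)\le0$ we get $f'\le-\varepsilon/3$ throughout. Then $f(D)\le f(0)-\varepsilon D/3=f(0)-\Delta$. Applying the same argument on $[0,D+\eta]$ for small $\eta>0$ gives $f(D+\eta)<f(0)-\Delta\le f_\star$, a contradiction.

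\textbf{Step 2 (deterministic bound).} Set $h:=c\,(\varepsilon/L_r)^{1/(\nu_r-1)}$ and partition $[0,D]$ into cells of length $rh$. In each cell, query $f'$ at $r$ equispaced nodes and let $q$ be the interpolating polynomial of degree $r-1$. Because $f'\in C^{r-1,\beta_r}$ with $[(f')^{(r-1)}]_{\beta_r}\le L_r$, the standard Taylor/Peano estimate for Lagrange interpolation gives
\[
\sup_{\text{cell}}|q-f'|\le C_r L_r h^{\nu_r-1}\le\varepsilon/3
\]
for $c$ small. By Step 1 there is some $x$ with $|f'(x)|<\varepsilon/3$, so the piecewise polynomial $q$, which is known explicitly and can be examined without further queries, has a point with $|q|<2\varepsilon/3$. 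The algorithm outputs any $\widehat x$ with $|q(\widehat x)|<2\varepsilon/3$, and then $|f'(\widehat x)|<\varepsilon$. The number of queries is $O(D/h)=O\big(L_r^{1/(\nu_r-1)}\Delta\,\varepsilon^{-\nu_r/(\nu_r-1)}\big)$, which is the first claimed bound.

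\textbf{Step 3 (randomized bound, the main obstacle).} The natural block length is $\ell:=(\Delta/L_r)^{1/\nu_r}$, for which $D/\ell=3\Delta^{(\nu_r-1)/\nu_r}L_r^{1/\nu_r}/\varepsilon$, exactly the target count. The plan is an importance-sampling scheme that uses $O(1)$ queries per block.
\begin{enumerate}[leftmargin=*,itemsep=1pt]
\item Pick a uniformly random point $x\in[0,D]$ and query $f'$ at $x$.
\item Use the line-integral identity $\int_a^b f'=f(b)-f(a)\ge -\Delta$ to show that the set $\{x\in[0,D]:-f'(x)\le 2\varepsilon\}$, or a suitable weighted version of it, carries a constant fraction of the mass. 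Here one uses $\int_0^D(-f')\le\Delta+\int_0^D(f')_+$, and positive excursions of $f'$ must pass through $0$, which itself produces stationary regions.
\item From a point where $|f'|\lesssim\varepsilon$, certify an $\varepsilon$-stationary point nearby using a bounded number of extra queries. The regularity on the scale $(\varepsilon/L_r)^{1/(\nu_r-1)}$ controls how quickly $f'$ can leave $(-\varepsilon,\varepsilon)$.
\end{enumerate}
Repeating $O(D/\ell)$ independent trials then succeeds with probability $\ge1/2$.

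The delicate point is item 2. A uniformly random point can land where $f'\le-\varepsilon$ with high probability, because $f'$ may equal $-\varepsilon$ almost everywhere on $[0,D]$. So I expect to need the sampling density weighted by the observed $|f'|$, in the spirit of the ``line-integral identity'' mentioned at the start of this section: sample $x$, query $f'(x)$, and move a random distance proportional to $\Delta/|f'(x)|$. This is where I anticipate the main technical work lies.
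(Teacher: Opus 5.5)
Your Steps 1 and 2 are correct and follow the paper's deterministic argument. The gap bound gives a near-stationary point on an interval of length $\Theta(\Delta/\varepsilon)$, a piecewise interpolant of $f'$ at scale $(\varepsilon/L_r)^{1/(\nu_r-1)}$ is built there, and the output is read off the interpolant with no further queries. Your rule of outputting any $\widehat x$ with $|q(\widehat x)|<2\varepsilon/3$ even removes the need for a continuous interpolant.

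The randomized bound, however, is not proved. Step 3 leaves its two substantive items open, and item 3 is false as stated. A sample with $f'(X)=-\tfrac32\varepsilon$ may lie in a long stretch where $f'\equiv-\tfrac32\varepsilon$, so no bounded number of nearby queries can certify anything. The useful event is $f'(X)>-\varepsilon$. Then either $X$ is already $\varepsilon$-stationary, or $f'(X)\ge\varepsilon$, which together with $f'(0)\le-\varepsilon$ gives a sign bracket $[0,X]$. Bisection maintains $f'(a)\le-\varepsilon$ and $f'(b)\ge\varepsilon$, or stops at an $\varepsilon$-stationary midpoint. It shrinks the bracket to length at most $h$ in $O(\log(D/h))=O(1+\log(S/\varepsilon))$ queries, where $S:=L_r^{1/\nu_r}\Delta^{(\nu_r-1)/\nu_r}\ge\varepsilon$. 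The final bracket contains a zero of $f'$, so your Step 2 rule applied on that single interval finishes with $O(1)$ further queries. The certificate is a sign change relative to the origin, not proximity to a point where $|f'|\lesssim\varepsilon$.

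The second missing idea is the sampling density. You can build it with your own Step 2 construction, run at the block scale $\ell=(\Delta/L_r)^{1/\nu_r}$ instead of $h$. This costs $O(D/\ell)=O(S/\varepsilon)$ queries and yields $P$ with $\|P-f'\|_{\infty,[0,D]}\le c_0:=C_rL_r\ell^{\nu_r-1}=C_rS$. Sample from the density proportional to $w:=(P+c_0+\varepsilon)_+$, which needs no further queries. Let $G:=\{x\in[0,D]:f'(x)>-\varepsilon\}$.
\begin{itemize}
\item Since $w\ge(f'+\varepsilon)_+$, you get $\int_G w\ge\int_0^D(f'+\varepsilon)\,dx\ge\varepsilon D-\Delta=2\Delta$.
\item On $G^c$ you have $w\le f'+2c_0+\varepsilon\le 2c_0$, so $\int_{G^c}w\le 2c_0D$.
\end{itemize}
Each sample therefore lands in $G$ with probability at least $(1+c_0D/\Delta)^{-1}=(1+C_rD/\ell)^{-1}$. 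Hence $O(D/\ell)$ independent samples hit $G$ with probability at least $1/2$. The right per-sample guarantee is $\Omega(\ell/D)$, not a constant fraction.

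This is also where the real obstacle to uniform sampling lies. The positive mass $\int_G(f'+\varepsilon)\ge 2\Delta$ may sit on a set of small measure where $f'$ is large, and $w$ overweights exactly those regions. Your idea of moving a random distance proportional to $\Delta/|f'(x)|$ is not needed. With these two pieces, the coarse interpolant, the samples, and one bracket refinement use $O(S/\varepsilon)$ queries on every run, and this is the paper's argument.
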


The theorem bounds the number of derivative queries required to find
an $\varepsilon$-stationary point, uniformly over the stated function
class. The deterministic algorithm always succeeds, while the
randomized algorithm succeeds with probability at least $1/2$
and satisfies the stated query budget on every execution.
In each bound, the minimum over $r\in\mathcal I$ selects the smallest
expression among the available smoothness conditions.
The constant $C_{\mathcal I,\boldsymbol\beta}$ is independent of
$\varepsilon$, $\mathbf L$, and $\Delta$.

\paragraph{Algorithm.}
By Lemma~\ref{lmm:normal}, it suffices to analyze the normalized
setting $[f^{(p)}]_{\beta_p}\le1$,
$f(0)-\inf f\le1$, and $0<\varepsilon\le1$.
Query the origin and return it if $|f'(0)|<\varepsilon$.
Otherwise, reflect the domain if necessary so that
$f'(0)\le-\varepsilon$, reflecting the final output back afterward.
Set $T:=\frac{2}{\varepsilon}$.

Both deterministic and randomized algorithms use a  deterministic approximation of $f'$.
The following lemma gives the query cost of constructing such an
approximation at a prescribed accuracy.

\begin{lemma}[\bf Approximation]
\label{lmm:derivative-interpolation}
Suppose $[f^{(r)}]_{\beta_r}\le1$ and write $\nu:=r+\beta_r$.
Given an interval $I=[a,b]$ and an accuracy $\delta>0$,
a deterministic algorithm uses
$O_{r,\beta_r}\!\left(
1+|I|\delta^{-\frac{1}{\nu-1}}\right)$
derivative queries to construct a continuous piecewise polynomial
$P_\delta$ satisfying
$\|f'-P_\delta\|_{\infty,I}\le\delta$.
Moreover, $P_\delta(a)=f'(a)$ and $P_\delta(b)=f'(b)$.
\end{lemma}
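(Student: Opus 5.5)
We approximate $g:=f'$ by piecewise polynomial interpolation on a uniform grid. The hypothesis $[f^{(r)}]_{\beta_r}\le1$ says exactly that $g^{(r-1)}$ is $\beta_r$-H\"older with constant $1$. So $g$ has ``smoothness order'' $\nu-1=(r-1)+\beta_r$, and the classical local polynomial interpolation error on a cell of length $\ell$ is $O_{r}(\ell^{\nu-1})$.

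\textbf{Step 1: the grid.} Fix a cell length $\ell:=c_{r,\beta_r}\,\delta^{\frac1{\nu-1}}$, where $c_{r,\beta_r}$ is small and will be chosen in Step 3. If $|I|\le\ell$, use the single cell $I$. Otherwise split $I$ into $M:=\lceil |I|/\ell\rceil$ equal cells $[t_{k-1},t_k]$, with $t_0=a$ and $t_M=b$. The number of cells is then $O(1+|I|\delta^{-\frac1{\nu-1}})$.

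\textbf{Step 2: the interpolant.} On each cell, take the $r$ equispaced nodes $t_{k-1}+i\,(t_k-t_{k-1})/(r-1)$ for $i=0,\dots,r-1$; when $r=1$, use the single node $t_{k-1}$. Query $g=f'$ at these nodes and let $P_\delta$ on the cell be the Lagrange interpolant of degree $\le r-1$. Adjacent cells share their endpoint nodes, so the total number of queries is at most $(r-1)M+1=O_{r,\beta_r}(1+|I|\delta^{-\frac1{\nu-1}})$. Because both endpoints of every cell are nodes when $r\ge2$, $P_\delta$ matches $f'$ there. It is therefore continuous and satisfies $P_\delta(a)=f'(a)$ and $P_\delta(b)=f'(b)$.

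\textbf{Step 3: the error bound.} On a cell of length $\ell'\le\ell$, compare $g$ with its Taylor polynomial $T$ of degree $r-1$ about the left endpoint. The integral form of the remainder, together with the H\"older bound on $g^{(r-1)}$, gives
\[
|g-T|\le \frac{\ell'^{\,\nu-1}}{(r-1)!}
\]
on the cell. Lagrange interpolation reproduces polynomials of degree $\le r-1$, and its operator norm on equispaced nodes is a Lebesgue constant $\Lambda_r$ that depends only on $r$. Hence
\[
\|g-P_\delta\|_\infty\le(1+\Lambda_r)\,\frac{\ell^{\nu-1}}{(r-1)!}.
\]
Choosing $c_{r,\beta_r}$ so that this quantity is at most $\delta$ completes the bound.

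\textbf{The case $r=1$.} With a single node per cell, the interpolant is piecewise constant and discontinuous. Instead, use the piecewise linear interpolant of $g$ at the grid points. Since $g$ is $\beta_1$-H\"older, its error on a cell is at most $2\ell^{\beta_1}$, which equals $2\ell^{\nu-1}$. This version is continuous and exact at $a$ and $b$, and it needs $M+1$ queries.

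\textbf{Main obstacle.} The delicate point is getting continuity and endpoint exactness together with the uniform Lebesgue-constant bound. Sharing the cell endpoints as nodes resolves it. Everything else is standard Taylor/H\"older bookkeeping.
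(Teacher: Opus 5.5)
Your proposal is correct and follows essentially the same route as the paper: a uniform mesh with cell length of order $\delta^{1/(\nu-1)}$, equispaced Lagrange interpolation of $f'$ on each cell with both endpoints as nodes, and the error bound $(1+\Lambda)|J|^{\nu-1}/(r-1)!$ obtained from the degree-$(r-1)$ Taylor polynomial. The paper uses $q=\max\{r,2\}$ nodes per cell, which is exactly your piecewise-linear fix for $r=1$, and it also treats the degenerate case $a=b$ separately (there your equispaced nodes coincide, and one instead takes the constant $f'(a)$ from a single query).
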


The lemma provides a deterministic approximation subroutine.
We use accuracy $\delta=\frac{\varepsilon}{8}$ for deterministic
search and $\delta=1$ for randomized search.
The endpoint identities preserve the derivative signs needed
for the level-crossing and bracket arguments.
Its proof is deferred to Appendix~\ref{app:derivative-interpolation}.


We query the origin and return it if it is $\varepsilon$-stationary.
Otherwise, we reflect the function so that $f'(0)\le-\varepsilon$
and apply Algorithm~\ref{alg:derivative-search}.

\noindent
\begin{minipage}[t]{0.56\linewidth}
\vspace{0pt}
\small
\begin{algorithm2e}[H]
\caption{\textsc{DerivativeSearch}}
\label{alg:derivative-search}
\DontPrintSemicolon
\SetKwInOut{Input}{Input}
\Input{$\mathcal O_f^1$, $\varepsilon$, and
$M\in\{\mathrm{det},\mathrm{rand}\}$;
$f'(0)\le-\varepsilon$.}

Set $T:=\frac{2}{\varepsilon}$ and
$\delta:=\frac{\varepsilon}{8}$ if $M=\mathrm{det}$,
otherwise $\delta:=1$.\;
Construct $P$ on $[0,T]$ with error $\delta$ using
Lemma~\ref{lmm:derivative-interpolation}.\;
\If{$M=\mathrm{det}$}{
    \Return any $z\in[0,T]$ with
    $P(z)=-\frac{3\varepsilon}{4}$.\;
}
Set $w:=(P+1+\varepsilon)_+$ and
$\rho:=\frac{w}{\int_0^T w(x)\,dx}$.\;
Set $N:=\left\lceil
\left(1+\frac{4}{\varepsilon}\right)\log2\right\rceil$.\;
\For{$i=1,\ldots,N$}{
    Draw $X\sim\rho$ independently and query $d:=f'(X)$.\;
    \uIf{$|d|<\varepsilon$}{\Return $X$.\;}
    \ElseIf{$d\ge\varepsilon$}{
        \Return \textsc{RefineBracket}$(0,X,\varepsilon)$.\;
    }
}
\Return $0$.\;
\end{algorithm2e}
\end{minipage}\hfill%
\begin{minipage}[t]{0.42\linewidth}
\vspace{0pt}
\small
\begin{algorithm2e}[H]
\caption{\textsc{RefineBracket}}
\label{alg:refine-bracket}
\DontPrintSemicolon
\SetKwInOut{Input}{Input}
\Input{$\mathcal O_f^1$, $a<b$, and $\varepsilon$;
$f'(a)\le-\varepsilon$,
$f'(b)\ge\varepsilon$.}

\While{$b-a>\varepsilon^{\frac{1}{\nu-1}}$}{
    Set $m:=\frac{a+b}{2}$ and query $d:=f'(m)$.\;
    \If{$|d|<\varepsilon$}{\Return $m$.\;}
    \eIf{$d\le-\varepsilon$}
        {$a\gets m$.\;}
        {$b\gets m$.\;}
}
Construct $Q$ on $[a,b]$ with error
$\frac{\varepsilon}{2}$ using
Lemma~\ref{lmm:derivative-interpolation}.\;
\Return any $z\in[a,b]$ with $Q(z)=0$.\;
\end{algorithm2e}
\end{minipage}

In deterministic mode, Algorithm~\ref{alg:derivative-search}
returns a level crossing of the fine approximation.
In randomized mode, it finds a stationary point or a sign bracket
with constant probability using $O_{r,\beta_r}(\varepsilon^{-1})$
queries. In the latter case, Algorithm~\ref{alg:refine-bracket}
uses $O_{r,\beta_r}(1+\log(\varepsilon^{-1}))$ additional queries.
Relative to \textsc{RandomSearch} in~\cite{chewi2023complexity},
we replace uniform sampling by importance sampling and certify
the terminal root using polynomial approximation.

\begin{proof}[Proof sketch of Theorem~\ref{thm:randomized-derivative-upper-bound}]
The initial gap and $T = \frac{2}{\varepsilon}$ imply that $\int_0^T(f'(x)+\varepsilon)\,dx\ge1$.
In deterministic mode, the approximation $P$ must cross
$-\frac{3\varepsilon}{4}$, and its error bound in Lemma~\ref{lmm:derivative-interpolation} certifies stationarity
at every crossing.

In randomized mode, the set $G:=\{x\in[0,T]:f'(x)>-\varepsilon\}$
has weight at least $1$, while its complement has weight at most $2T$.
Thus each sample hits $G$ with probability at least
$\frac{1}{1+4/\varepsilon}$.
The prescribed number of samples succeeds with probability at least
$\frac12$, and bracket refinement certifies a stationary point.
Lemma~\ref{lmm:derivative-interpolation} gives query bounds
$O_{r,\beta_r}(\varepsilon^{-\frac{\nu}{\nu-1}})$ and
$O_{r,\beta_r}(\varepsilon^{-1})$.
Normalization and minimization over $r$ give the theorem.
The proof is deferred to Appendix~\ref{app:derivative-only-upper}.
\end{proof}

\section{Lower Bounds for Randomized Algorithms with Full Information}
\label{sec:6}
We now allow each query at $x$ to return
$\mathcal O_f^{0:p}(x):=(f(x),f'(x),\ldots,f^{(p)}(x))$.
The following theorem gives a logarithmic lower bound for randomized
algorithms under the same simultaneous H\"older conditions.

\begin{theorem}[\bf Randomized lower bound]
\label{thm:randomized-full-jet-lower-bound}
Let $\mathcal I:=\{1,\ldots,p\}$ and $\nu_r:=r+\beta_r$.
There exists a constant $c_{p,\boldsymbol\beta}\in(0,1)$,
depending only on $p$ and $\boldsymbol\beta$, such that,
for every $L_1,\ldots,L_p,\Delta>0$ and
$0<\varepsilon\le c_{p,\boldsymbol\beta}
\min_{1\le r\le p}
L_r^{\frac1{\nu_r}}\Delta^{\frac{\nu_r-1}{\nu_r}}$,
\[
\mathcal C_{\mathrm{rand}}^{(\mathcal I,\boldsymbol\beta)}
(\varepsilon;\mathbf L,\Delta,\mathcal O^{0:p})
\ge c_{p,\boldsymbol\beta}
\log\big(
{
\displaystyle\min_{1\le r\le p}
L_r^{\frac1{\nu_r}}\Delta^{\frac{\nu_r-1}{\nu_r}}
}{\varepsilon^{-1}}
\big).
\]
\end{theorem}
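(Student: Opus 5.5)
Normalize first and then run a Yao-style argument on a hard family that hides the stationary point in a nested dyadic tree. By Lemma~\ref{lmm:normal}, assume $\Delta=1$ and write $S:=\min_{1\le r\le p}L_r^{1/\nu_r}$. After rescaling, the goal is to show that any randomized algorithm with $\mathcal O^{0:p}$ needs $\Omega_{p,\boldsymbol\beta}(\log(S/\varepsilon))$ queries.

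The plan is to build a family of $M=2^k$ instances with $k\asymp\log(S/\varepsilon)$. On this family, every query is answered in one of a bounded number $B=O(1)$ of distinguishable ways, independent of the hidden index except through a single \emph{comparison} with the query. By Yao's principle it then suffices to bound the success of deterministic algorithms on the uniform distribution over the $M$ instances. A deterministic algorithm with $N$ queries has at most $B^N$ leaves in its decision tree, and each leaf outputs one point. I will make the $\varepsilon$-stationary sets of distinct instances pairwise disjoint, so each output succeeds for at most one instance. Hence the success probability is at most $B^N/M$. Requiring this to be $\ge1/2$ gives $N\ge\log_B(M/2)=\Omega(\log(S/\varepsilon))$.

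For the construction, I reuse the building block of Section~\ref{sec:4}. Take $h:=K\max_r(\varepsilon/L_r)^{1/(\nu_r-1)}$ and a smooth nonincreasing profile with slope $-2\varepsilon$ on most of $[0,T]$, where $T\asymp 1/\varepsilon$ (chosen so that the gap is at most $\Delta=1$). For a hidden location $a$ on a grid of spacing $\asymp h$ inside $[0,T]$, define $g_a:=1-2\varepsilon\int_0^x\psi$. Here $\psi=1$ left of $a$ and $\psi$ becomes $0$ right of $a+h$ through the mollified indicator, so the function is flat to the right of $a$. This is not yet good enough, because $f$ would read off too much: a query to the right returns a constant value that reveals $a$ exactly.

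The repair is to make the function on the right of $a$ continue descending in a way that is \emph{identical across instances}. Use $f_a(x)=1-2\varepsilon x$ for $x<a$ and $f_a(x)=1-2\varepsilon x+\text{bump}(x-a)$ on $[a,a+2h]$. The bump is a smooth compactly supported function with integral of its derivative zero, with slope reaching $+2\varepsilon$ at its center so that a stationary point is created, and with amplitude $\varepsilon h$. The Hölder seminorms scale as $\varepsilon h^{1-\nu_r}\le L_r$ for large $K$, exactly as in Lemma~\ref{lmm:derivative-hard-family}. Outside $[a,a+2h]$, every instance then returns the same jet $(1-2\varepsilon x,-2\varepsilon,0,\ldots,0)$, so the oracle reveals only whether the query fell in the bump. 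This alone yields $\Omega(T/h)$, which is too strong for full information and therefore must be wrong for a randomized bound. Indeed the upper bound $O(\log)$ exists, so the correct family cannot have a common response. I expect the honest construction to mirror the binary-search structure: at each dyadic level the function value reveals one bit (which half contains the hidden well), and that bit is all it reveals. Concretely, I would take $f_a=\ell+\sum_{\ell'=1}^k\sigma_{\ell'}(a)\,\phi_{\ell'}$, a superposition of nested bumps of geometrically decreasing width $2^{-\ell'}T$. The height of each bump is $\asymp\varepsilon\cdot$width, so the $\nu_r$-seminorms remain at most $L_r$ because the width is $\ge h$. Each query then sees only the jet of a function depending on $O(1)$ levels near that point, which gives the $B=O(1)$ answer classes.

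The main obstacle is exactly this step: designing the nested family so that each query's full jet takes only $O(1)$ distinct values across the instances consistent with the past, while the stationary sets stay disjoint and the total gap stays $\le1$. If bounding the answer classes proves delicate, the fallback is an adversary argument. I would maintain an interval of candidate $a$'s and show that every query can be answered consistently with at least half of the current candidates. This gives $M2^{-N}$ surviving candidates with pairwise disjoint stationary sets. The conclusion then follows by the same counting, and Yao's principle extends it to randomized algorithms.
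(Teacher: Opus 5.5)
Your reduction is sound and is the one the paper uses: fix the seed, average over a uniformly chosen instance, bound the output leaves by $B^N$, and use pairwise disjoint $\varepsilon$-stationary sets so that each leaf succeeds on at most one instance. The gap is that you never exhibit a family with bounded branching $B$, and that construction is the real content of the theorem.

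The nested-bump superposition is not specified, and as described it does not give $B=O(1)$. With nested supports, a query near the hidden well lies inside bumps of all $k$ levels, so its jet can depend on all $k$ signs $\sigma_{\ell'}(a)$, which allows up to $2^k=M$ replies. You also have not located the stationary points of the superposition, so disjointness is unverified.

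The fallback does not rescue the randomized claim. An adversary that keeps at least half the candidates gives only a deterministic bound, and Yao's principle does not apply to it. The distributional count needs a bound on the number of distinct replies per query, not a surviving half. For example, a query could give one reply shared by half the candidates and a distinct reply to each of the others. It satisfies your invariant, yet a deterministic algorithm identifies the instance, and hence succeeds, with probability $\tfrac12$ after one query.

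The missing idea shows up in why your zero-net bump fails. The real defect is not that the resulting bound is too strong. Beyond the bump, $f_a(x)=1-2\varepsilon x$, so $f_a$ is unbounded below, and any repair beyond $T$ that is common to all instances creates a shared stationary point.

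The paper makes the hidden feature a step instead:
$f_j(x)=\bigl(1-\delta x+\Phi(x-j+1)\bigr)/K_{p,\boldsymbol\beta}$ on $[0,m]$, with $m\asymp1/\varepsilon$ and $\delta m=1$. Here $\Phi$ is a smooth monotone transition from $0$ to $1$ on $[0,1]$. The function is extended periodically with period $m$, and linearly for $x<0$. This is the same periodic-step shape as the Section~3 family.

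This construction has the three properties you need:
\begin{itemize}
\item The step restores exactly the value lost over one period, so $f_j\ge0$ and the gap is at most $1/K_{p,\boldsymbol\beta}$.
\item Outside the hidden unit intervals, every instance has derivatives $(-\delta/K_{p,\boldsymbol\beta},0,\ldots,0)$. Hence the stationary sets $\bigcup_k(j-1+km,j+km)$ are disjoint.
\item Let $x$ have residue $t$ modulo $m$, lying in unit cell $\ell$. Then $f_j(x)=(1-\delta t)/K_{p,\boldsymbol\beta}$ for every $j>\ell$ and $(2-\delta t)/K_{p,\boldsymbol\beta}$ for every $j<\ell$. So every query has at most three replies regardless of history, and $\tfrac12\le 3^N/m$.
\end{itemize}

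No multi-level structure is needed, since one comparison per query already caps the information at $\log_2 3$ bits. General parameters follow by normalizing at the index $r_*$ that minimizes $L_r^{1/\nu_r}\Delta^{(\nu_r-1)/\nu_r}$. This makes every $\widehat L_r\ge1$, so the unit-parameter family is admissible.
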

Compared with Theorem~\ref{thm:randomized-derivative-lower-bound},
the dependence on $\varepsilon$ drops from $\varepsilon^{-1}$ to
$\log(1/\varepsilon)$.
The bound holds for randomized algorithms and for the full-information
oracle.
Hence, once function values are available, neither randomness nor
derivatives of any order up to $p$ reduce the query complexity further.
Theorem~\ref{thm:deterministic-zeroth-order-upper} gives a matching upper
bound that is deterministic and queries $f$ only.
\subsection{Hard family}
\label{subsec:randomized-full-jet-hard-family}

We first construct the family for $L_1=\cdots=L_p=\Delta=1$.
Using the fixed kernel $\varphi$ from the preceding section, define $\Phi(t):=\int_{-\infty}^{t-\frac12}\varphi(u)\,du,$ then $\Phi(t)=
\begin{cases}
0, & t\le0,\\
1, & t\ge1.
\end{cases}$
Choose a sufficiently large constant $K_{p,\boldsymbol\beta}\ge1$.
For $0<\varepsilon\le\frac1{4K_{p,\boldsymbol\beta}}$, set
$m:=\left\lfloor
\frac1{2K_{p,\boldsymbol\beta}\varepsilon}
\right\rfloor,
\delta:=\frac1m.$
For each $j\in[m]$, define
\[
f_j(x):=
\frac{1-\delta x+\Phi(x-j+1)}{K_{p,\boldsymbol\beta}},
\qquad x\in[0,m].
\]
Extend $f_j$ periodically with period $m$ to $x\ge0$, and set
$f_j(x):=\frac{1-\delta x}{K_{p,\boldsymbol\beta}}$ for $x<0$.
The hidden interval is $I_j:=[j-1,j]$, with periodic copies
$I_j+km$, $k=0,1,\ldots$. Figure~\ref{fig:randomized-hard-family} illustrates a member of this family
and its derivative.

\begin{figure}[ht]
    \centering
    \includegraphics[width=0.8\textwidth]{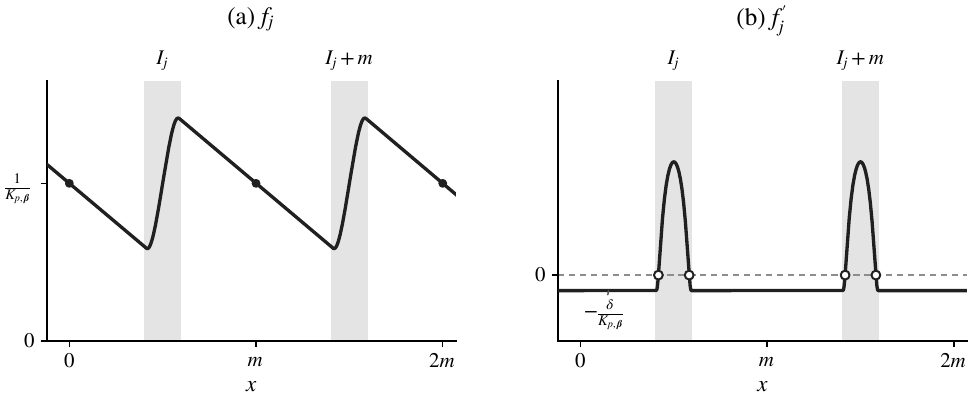}
    \vspace{-0.6cm}
    \caption{The periodic hard family with $m=5$ and $j=3$:
    (a) $f_j$ and (b) $f_j'$.
    Shading marks $I_j$ and $I_j+m$; open circles in (b)
    mark zeros of $f_j'$.}
    \label{fig:randomized-hard-family}
    \vspace{-0.6cm}
\end{figure}

\subsection{Properties and Proof Sketch}

The following lemma records the properties used in the lower bound;
its proof is deferred to Appendix~\ref{app:randomized-hard-family-properties}.

\begin{lemma}[\bf Properties of the hard family]
\label{lmm:randomized-hard-family}
For a sufficiently large $K_{p,\boldsymbol\beta}$, depending only on
$p$ and $\boldsymbol\beta$, the family above has the following properties.
\begin{enumerate}[leftmargin=*, itemsep=2pt, parsep=0pt]
    \item \textbf{Regularity and initial gap.}
    Every $f_j$ is $C^\infty$ and bounded below, with
    $[f_j^{(r)}]_{\beta_r}\le1$ for $1\le r\le p$ and
    $f_j(0)-\inf f_j\le\frac1{K_{p,\boldsymbol\beta}}\le1$.
    Moreover, $f_j'(0)=-\frac{\delta}{K_{p,\boldsymbol\beta}}
    \le-2\varepsilon$.

    \item \textbf{Indistinguishability.}
    For every fixed $x\in\mathbb R$, the response
    $\mathcal O_{f_j}^{0:p}(x)$ takes at most three distinct values
    as $j\in[m]$ varies.

    \item \textbf{Localization of stationary points.}
    For every $j\in[m]$,
    \[
    |f_j'(x)|<\varepsilon
    \quad\Longrightarrow\quad
    x\in\bigcup_{k=0}^{\infty}(j-1+km,j+km).
    \]
        These sets are pairwise disjoint for distinct $j$, so no point is
    $\varepsilon$-stationary for two different indices.
\end{enumerate}
\end{lemma}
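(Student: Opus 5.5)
We check the three properties of Lemma~\ref{lmm:randomized-hard-family} directly from the definition of $f_j$, using only that $\Phi\in C^\infty$, that $\Phi$ is nondecreasing, that $\Phi=0$ on $(-\infty,0]$ and $\Phi=1$ on $[1,\infty)$, and that all derivatives $\Phi^{(r)}$, $r\ge1$, are supported in $[0,1]$. We first verify that the periodic extension is well defined and smooth. On $[0,m]$ we have $f_j(m)-f_j(0)=(-\delta m+1)/K_{p,\boldsymbol\beta}=0$, since $\delta m=1$. All derivatives of order $\ge2$ vanish near both endpoints, and $f_j'=-\delta/K_{p,\boldsymbol\beta}$ there, because $\Phi(\cdot-j+1)$ is locally constant near $0$ and near $m$. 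The value $0$ at $x=0$ is exact only if $j\ge2$. For $j=1$, $\Phi(x)$ has its support in $[0,1]$, but the kernel is supported in $[-\frac12,\frac12]$, so $\Phi$ is flat near $0$ and $1$; hence $\Phi(\cdot-j+1)$ is constant near $x=0$ and near $x=m$ in every case. On $x<0$ the function is the same affine map $(1-\delta x)/K_{p,\boldsymbol\beta}$, which matches the value $1/K_{p,\boldsymbol\beta}$ and the slope at $0$. So $f_j\in C^\infty$.

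\textbf{Regularity and gap.} The function $f_j$ is continuous on $[0,m]$ and periodic on $[0,\infty)$, so it is bounded below there. On $(-\infty,0)$ it decreases toward $0$ and exceeds $1/K_{p,\boldsymbol\beta}$, so $\inf f_j=\min_{[0,m]}f_j\ge 0$ because $1-\delta x\ge0$ on $[0,m]$. Since $f_j(0)=1/K_{p,\boldsymbol\beta}$, we get $f_j(0)-\inf f_j\le 1/K_{p,\boldsymbol\beta}$. For the Hölder seminorms, $f_j^{(r)}=\Phi^{(r)}(\cdot-j+1-km)/K_{p,\boldsymbol\beta}$ plus the constant $-\delta/K_{p,\boldsymbol\beta}$ when $r=1$. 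The shifted copies have disjoint supports, which are separated by gaps of length $m-1\ge1$. I bound $[g]_\beta\le 2\max(\|g\|_\infty,\|g'\|_\infty)$ by splitting into $|x-y|\le1$ (mean value theorem) and $|x-y|>1$ (sup bound). This shows $[f_j^{(r)}]_{\beta_r}\le 2\max(\|\Phi^{(r)}\|_\infty,\|\Phi^{(r+1)}\|_\infty)/K_{p,\boldsymbol\beta}\le1$ once $K_{p,\boldsymbol\beta}$ is large. The disjointness of supports is what lets the two-point estimate apply to the sum. Finally, $f_j'(0)=-\delta/K_{p,\boldsymbol\beta}$, and $\delta=1/m\ge 2K_{p,\boldsymbol\beta}\varepsilon$ gives $f_j'(0)\le-2\varepsilon$.

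\textbf{Indistinguishability.} Fix $x$. If $x<0$, the response does not depend on $j$. Otherwise write $x=km+y$ with $y\in[0,m)$. Then $f_j^{(r)}(x)$ depends on $j$ only through $\Phi(y-j+1)$ and its derivatives. For $j-1>y$ this equals $(0,0,\ldots)$. For $j<y$ it equals $(1,0,\ldots)$. Since the intervals $[j-1,j]$ tile $[0,m]$, at most one index $j$ has $y\in(j-1,j)$, and only that index (together with the endpoint cases, where the value is already $0$ or $1$ with vanishing derivatives) gives a third value. So there are at most three distinct responses. Matching endpoint $y=0$ with $j=1$ is also covered, since $\Phi$ is flat there.

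\textbf{Localization.} Outside $\bigcup_k(j-1+km,j+km)$, every $x\ge0$ has $\Phi'(\cdot)=0$, so $|f_j'(x)|=\delta/K_{p,\boldsymbol\beta}\ge2\varepsilon$. The same holds for $x<0$. The open intervals $(j-1,j)$ for distinct $j\in[m]$, and their $m$-periodic copies, are pairwise disjoint. The only step needing care is the seam and smoothness bookkeeping at $x=0$ and $x=m$, together with the choice of $K_{p,\boldsymbol\beta}$ absorbing the constants $\|\Phi^{(r)}\|_\infty$ for $r\le p+1$. I expect the Hölder estimate across distinct periodic bumps to be the main technical point, and the disjoint-support splitting handles it.
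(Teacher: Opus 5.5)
Your proposal is correct and follows essentially the paper's route: the same check that values and derivatives match at the seams, the same sup-norm/derivative-norm splitting of the H\"older bound for the disjointly supported bumps, the same three-case split ($y\le j-1$, $y\ge j$, $y\in(j-1,j)$) for indistinguishability, and the same localization argument. One small correction: $\Phi(\cdot-j+1)$ need not be locally constant near $x=0$ (for $j=1$) or near $x=m$ (for $j=m$), since with $\operatorname{supp}\varphi=[-\frac12,\frac12]$, as for the footnote kernel, $\Phi$ takes values strictly between $0$ and $1$ on $(0,1)$. What you actually need, and correctly use later, is only that every derivative of $\Phi$ vanishes at $0$ and $1$; equivalently, $(1-\delta x+\Phi(x-j+1))/K_{p,\boldsymbol\beta}$ is smooth on $\mathbb R$ and coincides with the neighboring formula across each seam.
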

Property~2 is the reason function values help, and only logarithmically.
The value $f_j(x)$ reveals whether the hidden interval lies to the left of
$x$, to the right of $x$, or contains $x$, so a query eliminates a constant
fraction of the indices rather than a single one.
The derivatives add nothing to this, since away from the hidden interval
they are the same for every $j$.

\begin{proof}[Proof sketch of Theorem~\ref{thm:randomized-full-jet-lower-bound}]
In the normalized setting $L_1=\cdots=L_p=\Delta=1$,
Lemma~\ref{lmm:randomized-hard-family} gives
$m=\Theta_{p,\boldsymbol\beta}(1/\varepsilon)$ admissible instances
with disjoint stationary sets and at most three replies per query.

Fix the entire random seed of an algorithm using at most $N$ queries.
Its decision tree has at most $3^N$ output leaves.
By localization, each leaf succeeds on at most one instance.
Averaging over the seed therefore gives
\[
\frac12
\le\frac1m\sum_{j=1}^m
\mathbb P\{\text{success on }f_j\}
\le\frac{3^N}{m}.
\]
Hence $N\ge\frac{\log(m/2)}{\log3}
=\Omega_{p,\boldsymbol\beta}(\log(1/\varepsilon))$.
For general parameters, apply Lemma~\ref{lmm:normal} at an index minimizing
$L_r^{\frac1{\nu_r}}\Delta^{\frac{\nu_r-1}{\nu_r}}$.
All normalized H\"older bounds are then at least $1$, so the same hard family
is admissible. The full proof is deferred to
Appendix~\ref{app:randomized-full-jet-lower-bound}.
\end{proof}

\section{Upper Bounds For Deterministic Zeroth-order Algorithms}
\label{sec:7}

We next establish a deterministic upper bound using only function values.
We adapt the bisection approach of~\cite{chewi2023complexity},
using local polynomial interpolation to approximate derivatives.
Applying the algorithm under the most favorable smoothness condition
gives the following bound.

\begin{theorem}[\bf Deterministic function-value upper bound]
\label{thm:deterministic-zeroth-order-upper}
There exists a constant $C_{\mathcal I,\boldsymbol\beta}>0$, depending only
on $\mathcal I$ and $\boldsymbol\beta$, such that, for every
$\mathbf L\in(0,\infty)^{\mathcal I}$, $\Delta>0$, and $\varepsilon>0$,
\[
\mathcal C_{\mathrm{det}}^{(\mathcal I,\boldsymbol\beta)}
(\varepsilon;\mathbf L,\Delta,\mathcal O^0)
\le
C_{\mathcal I,\boldsymbol\beta}
\big(
1+\log_+\big(
\min_{r\in\mathcal I}
L_r^{\frac{1}{\nu_r}}
\Delta^{\frac{\nu_r-1}{\nu_r}}
\varepsilon^{-1}
\big)
\big),
\]
where $\log_+(t):=\max\{\log t,0\}$.
\end{theorem}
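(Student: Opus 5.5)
We fix $r\in\mathcal I$ attaining the minimum and normalize via Lemma~\ref{lmm:normal}, applied with $r$ in the role of the top order. The rescaling argument of that lemma works for any single index, so we may assume $[f^{(r)}]_{\beta_r}\le1$ and $f(0)-f_\star\le1$, with target accuracy $\varepsilon'$ equal to $\varepsilon$ divided by $L_r^{1/\nu_r}\Delta^{(\nu_r-1)/\nu_r}$. If $\varepsilon'\ge c$ for a suitable constant $c$, then $O(1)$ queries suffice; this is handled by the same routine below with a constant-size search. So we assume $\varepsilon'$ is small and write $\nu=\nu_r$.

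\textbf{Derivative estimates from values.} Take the degree-$r$ Lagrange interpolant of $f$ on $r+1$ equispaced nodes with spacing $\ell$ around a point $x$. Its derivative at $x$ approximates $f'(x)$ with error $C_{r,\beta_r}\ell^{\nu-1}$, by Taylor's theorem with Hölder remainder applied to divided differences. Choosing $\ell=(\varepsilon'/(4C))^{1/(\nu-1)}$ makes this error at most $\varepsilon'/4$, so each estimate $g(x)$ of $f'(x)$ costs $O_r(1)$ value queries. A point with $|g(x)|<3\varepsilon'/4$ is then certified $\varepsilon'$-stationary.

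\textbf{Bracketing.} Compute $g(0)$ and stop if it is small. Otherwise assume $g(0)\le-3\varepsilon'/4$, reflecting if necessary. We need a point $b>0$ with $g(b)\ge 3\varepsilon'/4$, or a certified stationary point along the way. We find it by value comparisons, following the bisection approach of \cite{chewi2023complexity}. Set $T=C/\varepsilon'$ and query $f$ at the points $2^k\ell$ for $k=0,1,\ldots$ up to $T$, which takes $O(\log(1/\varepsilon'))$ queries. Suppose $f'\le-\varepsilon'/2$ held on all of $[0,T]$. Then $f(T)\le f(0)-T\varepsilon'/2<f_\star$, a contradiction. Hence a point with $f'>-\varepsilon'/2$ exists in $[0,T]$, but it must be located, and doing so by value-based bisection is the crux.

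\textbf{Value-based bisection.} We maintain an interval $[a,b]$ together with the invariant
\[
f(b)-f(a)>-\tfrac{\varepsilon'}{2}(b-a),
\]
which says the average slope on $[a,b]$ exceeds $-\varepsilon'/2$. Initially this holds with $[a,b]=[0,T]$ by the argument above, after adjusting constants: either $f(T)>f(0)-T\varepsilon'/2$, or we already have a contradiction with $f_\star$. At the midpoint $m$, at least one half $[a,m]$ or $[m,b]$ satisfies the same average-slope inequality, by convexity of averages, and the value $f(m)$ tells us which. After $O(\log(T/\ell))=O_{r,\beta_r}(\log(1/\varepsilon'))$ halvings we have $b-a\le\ell$. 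The mean value theorem gives a point $\xi\in[a,b]$ with $f'(\xi)>-\varepsilon'/2$. Hölder continuity of $f^{(r)}$ alone does not bound $f'$ on short intervals, so instead we run the finite-difference interpolation with spacing $\sim\ell$ on $[a,b]$. This yields an estimate of $f'$ across the whole interval with uniform error $\varepsilon'/4$.

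\textbf{Main obstacle.} The difficulty is that the average slope exceeding $-\varepsilon'/2$ gives no upper bound on $f'$, so $f'$ may be large and positive on $[a,b]$. This is why we interleave the derivative estimates $g$ with the bisection. At each midpoint we compute $g(m)$. If $|g(m)|<3\varepsilon'/4$ we stop. If $g(m)\ge3\varepsilon'/4$, we have a sign bracket with $g(0)\le-3\varepsilon'/4$. In that case we switch to bisection on the sign of $g$, as in \textsc{RefineBracket} but using the estimates $g$ in place of $f'$. This runs until the interval length is of order $\ell$. On such an interval, the local degree-$r$ interpolant approximates $f'$ uniformly within $\varepsilon'/4$. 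By continuity it has a root, and that root is $\varepsilon'/2$-stationary. If instead $g(m)\le-3\varepsilon'/4$ at every step, then the slope-invariant bisection terminates on an interval of length $\ell$ with average slope exceeding $-\varepsilon'/2$. There the local interpolant again gives a point with $|f'|<\varepsilon'$, or a positive estimate that completes a sign bracket. Every phase costs $O_{r,\beta_r}(\log(1/\varepsilon'))$ queries, which gives the bound in the statement with $\log_+$ of the minimum over $r\in\mathcal I$.
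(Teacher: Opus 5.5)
Your proposal is correct and follows essentially the paper's \textsc{SecantSearch} argument: derivative estimates from local value interpolation, a bisection that keeps a left endpoint with negative estimate together with either a sign bracket or a secant-slope certificate (the half is chosen by comparing $2f(c)$ with $f(a)+f(b)$), and a final local interpolant on a short interval whose derivative must take a value near zero. The paper differs only cosmetically: it merges your two phases into a single loop invariant and uses endpoint interpolation rather than the mean value theorem in the final step, and your preliminary queries at the points $2^k\ell$ play no role and can be dropped.
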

The $\log_+$ appears because the theorem places no upper limit on
$\varepsilon$: when $\varepsilon\ge\min_{r\in\mathcal I}
L_r^{\frac{1}{\nu_r}}\Delta^{\frac{\nu_r-1}{\nu_r}}$, a constant number of
queries suffices.
Together with Theorem~\ref{thm:randomized-full-jet-lower-bound}, the bound
shows that $\Theta(\log(1/\varepsilon))$ queries are necessary and
sufficient with function values, for both deterministic and randomized
algorithms.

\paragraph{Algorithm}
Fix $r\in\mathcal I$ and write $\nu:=r+\beta_r$.
By Lemma~\ref{lmm:normal}, we work with
$[f^{(r)}]_{\beta_r}\le1$, $f(0)-\inf f\le1$, and
$0<\varepsilon\le1$. We first record a function-value approximation lemma.
Its proof is deferred to Appendix~\ref{app:value-interpolation}.

\begin{lemma}[\bf Function-value approximation]
\label{lmm:value-interpolation}
There exists a constant $a_r\ge1$, depending only on $r$, such that,
given an interval $I=[a,b]$ with $a<b$, a deterministic algorithm
uses $O_r(1)$ function-value queries to construct a polynomial
$\mathsf Q_I$ satisfying
\[
\|f'-\mathsf Q_I'\|_{\infty,I}\le a_r|I|^{\nu-1},
\qquad
\mathsf Q_I(a)=f(a),\quad \mathsf Q_I(b)=f(b).
\]
\end{lemma}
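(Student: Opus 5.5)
We would build $\mathsf Q_I$ from $r+2$ function values by polynomial interpolation, using a single affine rescaling and a Peano-kernel / Taylor-remainder argument; the constant $a_r$ then depends only on the fixed reference nodes.

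\emph{Construction.} Fix reference nodes $0=t_0<t_1<\cdots<t_{r+1}=1$, for instance equispaced with $t_i:=i/(r+1)$. Query $f$ at $x_i:=a+t_i|I|$, which costs $r+2=O_r(1)$ queries. Let $\mathsf Q_I$ be the unique polynomial of degree at most $r+1$ interpolating these values. Since $t_0=0$ and $t_{r+1}=1$ are nodes, the endpoint identities $\mathsf Q_I(a)=f(a)$ and $\mathsf Q_I(b)=f(b)$ hold automatically.

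\emph{Error bound.} Write $h:=|I|$ and let $T$ be the degree-$r$ Taylor polynomial of $f$ at $a$. Set $R:=f-T$. By the integral form of the remainder and $[f^{(r)}]_{\beta_r}\le1$:
\begin{itemize}
\item $|R(x)|\le h^{\nu}/r!$ on $I$;
\item $|R'(x)|\le h^{\nu-1}/(r-1)!$ on $I$, since $R'$ is the order-$(r-1)$ Taylor remainder of $f'$, whose $(r-1)$-th derivative $f^{(r)}$ is $\beta_r$-H\"older.
\end{itemize}
Interpolation of degree $\le r+1$ reproduces $T$, because $\deg T\le r$. Hence
\[
f'-\mathsf Q_I'=R'-(\Pi R)',
\]
where $\Pi$ denotes the interpolation operator at the nodes $x_i$.

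It remains to bound $(\Pi R)'$. Write $\Pi R=\sum_i R(x_i)\ell_i$ with Lagrange basis polynomials $\ell_i(x)=\lambda_i((x-a)/h)$, where the $\lambda_i$ are the reference Lagrange polynomials on $[0,1]$. Then $\ell_i'=h^{-1}\lambda_i'$, so
\[
\|(\Pi R)'\|_{\infty,I}\le h^{-1}\cdot\frac{h^{\nu}}{r!}\sum_i\|\lambda_i'\|_{\infty,[0,1]}=C_r h^{\nu-1},
\]
with $C_r$ depending only on $r$. Combining the two pieces gives
\[
\|f'-\mathsf Q_I'\|_{\infty,I}\le a_r h^{\nu-1},\qquad a_r:=\max\Bigl\{1,\tfrac1{(r-1)!}+C_r\Bigr\}.
\]
This constant depends only on $r$, uniformly in $\beta_r\in(0,1]$.

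\emph{Main obstacle.} The one delicate point is to compare the interpolant with the Taylor polynomial, rather than to differentiate a classical interpolation error formula. The classical formula would need $f^{(r+2)}$, which $f$ does not have. The other requirements are simple to meet: using degree $r+1$ with $r+2$ nodes reproduces degree-$r$ polynomials while also pinning both endpoints, and scale invariance of the reference Lagrange basis makes the constants independent of $|I|$.
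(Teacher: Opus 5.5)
Your proposal is correct and matches the paper's proof: compare the interpolant with the degree-$r$ Taylor polynomial at $a$, use that interpolation reproduces it, and bound the derivative of the interpolated remainder through the rescaled Lagrange basis, which gives $a_r=\max\{1,\frac{1}{(r-1)!}+\frac{\Gamma}{r!}\}$. The only difference is cosmetic: you use $r+2$ nodes, while the paper uses $\max\{r+1,3\}$. This does not matter, since both node sets include the endpoints and reproduce polynomials of degree $r$.
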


Set $h_\varepsilon:=
\big(\frac{\varepsilon}{8a_r}\big)^{\frac{1}{\nu-1}}$.
Using Lemma~\ref{lmm:value-interpolation} on an interval $I$
of length $h_\varepsilon$ with endpoint $x$,
compute $D(x):=\mathsf Q_I'(x)$.
This records $f(x)$ and guarantees
$|D(x)-f'(x)|\le\frac{\varepsilon}{8}$.
\begin{algorithm2e}[ht]
\caption{\textsc{SecantSearch}}
\label{alg:secant-search}
\DontPrintSemicolon
\SetKwInOut{Input}{Input}
\Input{$\mathcal O_f^0$, $0<\varepsilon\le1$, and cached
$D(0)<-\frac{3\varepsilon}{4}$.}
Set $a:=0$, $b:=\frac{4}{\varepsilon}$; compute $D(b)$.\;
\lIf{$|D(b)|\le\frac{3\varepsilon}{4}$}{\Return $b$}
\While{$b-a>h_\varepsilon$}{
    Set $c:=\frac{a+b}{2}$; compute $D(c)$.\;
    \lIf{$|D(c)|\le\frac{3\varepsilon}{4}$}{\Return $c$}
    \uIf{$D(c)>\frac{3\varepsilon}{4}$}{Set $b:=c$.\;}
    \uElseIf{$D(b)>\frac{3\varepsilon}{4}$}{Set $a:=c$.\;}
    \Else{$[a,b]\leftarrow
    \begin{cases}
        [a,c], & 2f(c)\ge f(a)+f(b),\\
        [c,b], & 2f(c)<f(a)+f(b).
    \end{cases}$\;\;}
}
Construct $\mathsf Q_{[a,b]}$.\;
\Return any $z\in[a,b]$ with
$\mathsf Q_{[a,b]}'(z)=-\frac{\varepsilon}{2}$.\;
\end{algorithm2e}

Compute $D(0)$ and return $0$ if
$|D(0)|\le\frac{3\varepsilon}{4}$.
Otherwise, reflect if necessary, negating the cached estimate so that
$D(0)<-\frac{3\varepsilon}{4}$.
Apply Algorithm~\ref{alg:secant-search}, undoing any reflection on its output.

A small derivative estimate certifies stationarity.
Otherwise, the search maintains a negative left-endpoint derivative
and either a sign bracket or a secant slope at least
$-\frac{\varepsilon}{4}$.
The first two updates preserve a sign bracket; the remaining updates
select the half with the larger secant slope.
These certificates guarantee the terminal polynomial crossing,
where the approximation error gives
$|f'(z)|\le\frac{5\varepsilon}{8}<\varepsilon$.

\begin{proof}[Proof sketch of Theorem~\ref{thm:deterministic-zeroth-order-upper}]
The initial gap gives
$\frac{f(4/\varepsilon)-f(0)}{4/\varepsilon}
\ge-\frac{\varepsilon}{4}$.
Each iteration preserves $D(a)<-\frac{3\varepsilon}{4}$ and either
$D(b)>\frac{3\varepsilon}{4}$ or this secant lower bound.
The estimate error certifies the endpoint signs, and the parent
secant is the average of the two half-interval secants.
Thus the appropriate half retains a certificate while its length halves.

At length at most $h_\varepsilon$, the value interpolant satisfies
$\|\mathsf Q_{[a,b]}'-f'\|_\infty\le\frac{\varepsilon}{8}$.
Its derivative is at most $-\frac{\varepsilon}{2}$ at $a$
and either at least $\frac{\varepsilon}{2}$ at $b$ or has average
at least $-\frac{\varepsilon}{4}$.
Hence it crosses $-\frac{\varepsilon}{2}$, yielding
$|f'(z)|\le\frac{5\varepsilon}{8}<\varepsilon$.
Each iteration uses $O_r(1)$ function-value queries, and the number
of iterations is $O_{r,\beta_r}(1+\log(1/\varepsilon))$.
Normalization, with accuracy capped at $1$, and minimization over $r$
give the theorem.
The full proof is deferred to Appendix~\ref{app:function-value-upper}.
\end{proof}

\section{Conclusion}
We determined the query complexity of finding approximate stationary points
of one-dimensional functions under higher-order H\"older smoothness, for
deterministic and randomized algorithms and for each of the oracles
considered.
The order of smoothness affects only the deterministic derivative-only
complexity, and higher-order derivative queries improve none of the bounds.
By embedding our hard instances, we also showed that the
$\varepsilon$-dependence of the H\"older extension of
OFFAR$p$~\citep{gratton2023convergence} is optimal in every dimension, and
that of deterministic gradient-only trapping~\citep{ZSS26} in dimension two.

Several questions remain open.
First, the complexity of randomized gradient-only algorithms in dimension
two is not settled.
The trapping algorithm of \cite{ZSS26} makes
$\widetilde O(\varepsilon^{-\frac{2\nu_p}{2\nu_p-1}})$ queries, whereas the
only lower bound we know is the $\Omega(\varepsilon^{-1})$ obtained by
embedding.
Second, higher-order derivative queries do not help in one dimension but do
help in sufficiently high dimension, and randomness shows the opposite
behavior.
The dimensions at which these transitions occur are not known.

\section*{AI Disclosure}

AI tools were used to assist with literature searches, the development and verification of proof
arguments, the presentation of proofs, the polishing of the exposition, and typesetting. The authors
independently verified all mathematical arguments and take full responsibility for the content of the
paper.

\section*{Acknowledgment}
We thank Taoli Zheng for helpful discussions. TS was partially supported by
JST CREST (JPMJCR2115) and JSPS KAKENHI (24K02905). 
This work was partially supported by JST ERATO (JPMJER2601). 
This research is supported by the
National Research Foundation, Singapore and the Ministry of Digital Development and Information
under the AI Visiting Professorship Programme (award number AIVP-2024-004). Any opinions,
findings and conclusions or recommendations expressed in this material are those of the author(s)
and do not reflect the views of National Research Foundation, Singapore and the Ministry of Digital
Development and Information.

\newpage
\bibliography{reference}

\newcommand{\etalchar}[1]{$^{#1}$}
\begin{thebibliography}{AAZB{\etalchar{+}}17}

\bibitem[AAZB{\etalchar{+}}17]{agarwal2017finding}
Naman Agarwal, Zeyuan Allen-Zhu, Brian Bullins, Elad Hazan, and Tengyu Ma.
\newblock Finding approximate local minima faster than gradient descent.
\newblock In {\em Proceedings of the 49th annual ACM SIGACT symposium on theory
  of computing}, pages 1195--1199, 2017.

\bibitem[BCN18]{BCN18}
L{\'{e}}on Bottou, Frank~E. Curtis, and Jorge Nocedal.
\newblock {Optimization Methods for Large-Scale Machine Learning}.
\newblock {\em {SIAM} Rev.}, 60(2):223--311, 2018.

\bibitem[BGM{\etalchar{+}}17]{birgin2017worst}
Ernesto~G Birgin, JL~Gardenghi, Jos{\'e}~Mario Mart{\'\i}nez, Sandra~Augusta
  Santos, and Ph~L Toint.
\newblock Worst-case evaluation complexity for unconstrained nonlinear
  optimization using high-order regularized models.
\newblock {\em Mathematical Programming}, 163:359--368, 2017.

\bibitem[BM20]{bubeck2020trap}
S{\'e}bastien Bubeck and Dan Mikulincer.
\newblock How to trap a gradient flow.
\newblock In {\em Proceedings of the Thirty-Third Conference on Learning
  Theory}, volume 125 of {\em Proceedings of Machine Learning Research}, pages
  940--960. PMLR, 2020.

\bibitem[CBS23]{chewi2023complexity}
Sinho Chewi, S{\'e}bastien Bubeck, and Adil Salim.
\newblock On the complexity of finding stationary points of smooth functions in
  one dimension.
\newblock In {\em International Conference on Algorithmic Learning Theory},
  pages 358--374. PMLR, 2023.

\bibitem[CDHS17a]{carmon2017convex}
Yair Carmon, John~C. Duchi, Oliver Hinder, and Aaron Sidford.
\newblock {``Convex Until Proven Guilty'': Dimension-Free Acceleration of
  Gradient Descent on Non-Convex Functions}.
\newblock In Doina Precup and Yee~Whye Teh, editors, {\em Proceedings of the
  34th International Conference on Machine Learning}, volume~70 of {\em
  Proceedings of Machine Learning Research}, pages 654--663. PMLR, 2017.

\bibitem[CDHS17b]{carmon2017lower}
Yair Carmon, John~C Duchi, Oliver Hinder, and Aaron Sidford.
\newblock Lower bounds for finding stationary points i.
\newblock {\em arXiv preprint arXiv:1710.11606}, 2017.

\bibitem[CDHS18]{carmon2018accelerated}
Yair Carmon, John~C Duchi, Oliver Hinder, and Aaron Sidford.
\newblock Accelerated methods for nonconvex optimization.
\newblock {\em SIAM Journal on Optimization}, 28(2):1751--1772, 2018.

\bibitem[CGT11]{cartis2011adaptive}
Coralia Cartis, Nicholas I.~M. Gould, and Philippe~L. Toint.
\newblock Adaptive cubic regularisation methods for unconstrained optimization.
  {Part I}: Motivation, convergence and numerical results.
\newblock {\em Mathematical Programming}, 127(2):245--295, 2011.

\bibitem[GJT21]{gratton2021h}
Serge Gratton, Sadok Jerad, and Philippe~L Toint.
\newblock H$\backslash$" older gradient descent and adaptive regularization
  methods in banach spaces for first-order points.
\newblock {\em arXiv preprint arXiv:2104.02564}, 2021.

\bibitem[GJT23]{gratton2023convergence}
Serge Gratton, Sadok Jerad, and Philippe~L. Toint.
\newblock Convergence properties of an objective-function-free optimization
  regularization algorithm, including an {$\mathcal{O}(\epsilon^{-3/2})$}
  complexity bound.
\newblock {\em SIAM Journal on Optimization}, 33(3):1621--1646, 2023.

\bibitem[HZ23]{hollender2023computational}
Alexandros Hollender and Manolis Zampetakis.
\newblock The computational complexity of finding stationary points in
  non-convex optimization.
\newblock In {\em Proceedings of the Thirty-Sixth Conference on Learning
  Theory}, volume 195 of {\em Proceedings of Machine Learning Research}, pages
  5571--5572. PMLR, 2023.

\bibitem[LL23]{li2023restarted}
Huan Li and Zhouchen Lin.
\newblock Restarted nonconvex accelerated gradient descent: No more
  polylogarithmic factor in the in the o (epsilon\^{}(-7/4)) complexity.
\newblock {\em Journal of Machine Learning Research}, 24(157):1--37, 2023.

\bibitem[LW15]{LW15}
Po{-}Ling Loh and Martin~J. Wainwright.
\newblock {Regularized M-estimators with nonconvexity: statistical and
  algorithmic theory for local optima}.
\newblock {\em J. Mach. Learn. Res.}, 16:559--616, 2015.

\bibitem[MT24]{marumo2024parameter}
Naoki Marumo and Akiko Takeda.
\newblock Parameter-free accelerated gradient descent for nonconvex
  minimization.
\newblock {\em SIAM Journal on Optimization}, 34(2):2093--2120, 2024.

\bibitem[NP06]{nesterov2006cubic}
Yurii Nesterov and Boris~T. Polyak.
\newblock Cubic regularization of {Newton} method and its global performance.
\newblock {\em Mathematical Programming}, 108(1):177--205, 2006.

\bibitem[NW99]{NW06}
Jorge Nocedal and Stephen~J. Wright.
\newblock {\em {Numerical Optimization}}.
\newblock Springer, 1999.

\bibitem[PV91]{PV91}
Panos~M. Pardalos and Stephen~A. Vavasis.
\newblock Quadratic programming with one negative eigenvalue is {NP}-hard.
\newblock {\em Journal of Global Optimization}, 1(1):15--22, 1991.

\bibitem[Vav93]{vavasis1993black}
Stephen~A Vavasis.
\newblock Black-box complexity of local minimization.
\newblock {\em SIAM Journal on Optimization}, 3(1):60--80, 1993.

\bibitem[ZHTS25]{zhou2025adaptive}
Huanjian Zhou, Andi Han, Akiko Takeda, and Masashi Sugiyama.
\newblock The adaptive complexity of finding a stationary point.
\newblock In Nika Haghtalab and Ankur Moitra, editors, {\em Proceedings of the
  Thirty-Eighth Conference on Learning Theory}, volume 291 of {\em Proceedings
  of Machine Learning Research}, pages 6091--6123. PMLR, 2025.

\bibitem[ZSS26]{ZSS26}
Huanjian Zhou, Masashi Sugiyama, and Taiji Suzuki.
\newblock Finding stationary points under higher-order smoothness: Trapping
  algorithms and first-order lower bounds.
\newblock {\em arXiv preprint arXiv:2610.02791}, 2026.

\end{thebibliography}
\bibliographystyle{alpha}

\newpage

\appendix

\section{Proof of Lemma~\ref{lmm:normal}}
\label{app:1}
\begin{proof}
Define $g(x):=f(sx)/\Delta$. For every $r\in\mathcal I$,
\[
[g^{(r)}]_{\beta_r}
=
\frac{s^{r+\beta_r}}{\Delta}[f^{(r)}]_{\beta_r},
\qquad
g(0)-\inf g
=
\frac{f(0)-\inf f}{\Delta}.
\]
Consequently,
\[
f\in\mathcal F_{\mathcal I,\boldsymbol\beta}(\mathbf L,\Delta)
\quad\Longleftrightarrow\quad
g\in\mathcal F_{\mathcal I,\boldsymbol\beta}
(\widehat{\mathbf L},1),
\]
with inverse $f(y)=\Delta g(y/s)$.

For $0\le r\le p$, with $f^{(0)}:=f$ and $g^{(0)}:=g$,
\[
g^{(r)}(x)=\frac{s^r}{\Delta}f^{(r)}(sx),
\qquad
f^{(r)}(y)=\frac{\Delta}{s^r}g^{(r)}(y/s).
\]
Thus each oracle query can be simulated by one query in either
direction. Moreover,
\[
|g'(x)|<\frac{s\varepsilon}{\Delta}
\quad\Longleftrightarrow\quad
|f'(sx)|<\varepsilon,
\qquad
\frac{s\varepsilon}{\Delta}
=
\frac{\varepsilon}
{L_p^{1/\nu_p}\Delta^{(\nu_p-1)/\nu_p}}.
\]
The two reductions preserve query counts and success probabilities,
proving the claimed equality.
\end{proof}

\section{Proofs in Section~\ref{sec:randomized-lower-bound}}
\subsection{Proof of Lemma~\ref{lem:hard-family-properties}}
\label{app:2}

\begin{proof}[Proof of Lemma~\ref{lem:hard-family-properties}]
Since $(2K\varepsilon)^{-1}\ge4$ and
$\lfloor t\rfloor\ge t/2$ for $t\ge2$,
\[
m\ge\max\left\{4,\frac{1}{4K\varepsilon}\right\},
\qquad
2K\varepsilon\le\delta=\frac1m\le\frac14.
\]

\paragraph{Regularity and initial gap.}
The endpoint identities for $\eta_p$ and the equality $\delta m=1$
ensure matching values and derivatives through order $p$ at every
junction, including the origin. Thus $f_j\in C^p(\mathbb R)$.
On $[0,m]$, we have $1-\delta x\ge0$ and $\eta_p\ge0$;
on $(-\infty,0]$, we have $Kf_j(x)=1-\delta x\ge1$.
Periodicity therefore gives
\[
f_j\ge0,
\qquad
f_j(0)-\inf f_j\le f_j(0)=\frac1K\le1.
\]

Fix $r\in\mathcal I$.
The function
$f_j^{(r)}+\frac{\delta}{K}\mathbf1_{\{r=1\}}$
is zero outside $H_j$ and a translate of $\eta_p^{(r)}/K$
on each hidden interval. Hence
\[
\left\|
f_j^{(r)}+\frac{\delta}{K}\mathbf1_{\{r=1\}}
\right\|_\infty
\le\frac{\|\eta_p^{(r)}\|_{\infty,[0,1]}}{K}
\le\frac12.
\]
Each piece is Lipschitz with constant at most
$\|\eta_p^{(r+1)}\|_{\infty,[0,1]}/K\le1/2$.
Continuity allows these bounds to be summed across the finitely
many junctions between any $x,y\in\mathbb R$. Consequently,
\[
|f_j^{(r)}(x)-f_j^{(r)}(y)|
\le\min\left\{\frac{|x-y|}{2},1\right\}
\le |x-y|^{\beta_r}.
\]
The last inequality follows by considering $|x-y|\le1$ and
$|x-y|>1$ separately.
Thus $[f_j^{(r)}]_{\beta_r}\le1$ for every $r\in\mathcal I$,
proving $f_j\in\mathcal F_{\mathcal I,\boldsymbol\beta}(\mathbf1,1)$.

\paragraph{Indistinguishability.}
The construction and the endpoint identities give
\[
x\notin H_j
\quad\Longrightarrow\quad
f_j'(x)=-\frac{\delta}{K},
\qquad
f_j^{(r)}(x)=0\quad(2\le r\le p).
\]
Hence $\mathcal O_{f_j}^{1:p}(x)=(-\delta/K,0,\ldots,0)$
outside $H_j$, with $\delta/K\ge2\varepsilon$.

\paragraph{Localization of stationary points.}
The preceding identity and the definition of $H_j$ imply
\[
\{x:|f_j'(x)|<\varepsilon\}\subseteq H_j,
\qquad
H_i\cap H_j=\varnothing\quad(i\ne j).
\]
Moreover,
\[
f_j'(j-1)=f_j'(j)=-\frac{\delta}{K}<0,
\qquad
\int_{j-1}^{j}f_j'(x)\,dx=\frac{1-\delta}{K}>0.
\]
Thus $f_j'$ is positive somewhere in $(j-1,j)$.
Continuity gives a zero on either side of that point, and
periodicity yields stationary points in every component of $H_j$.
\end{proof}

\subsection{Proof of Theorem~\ref{thm:randomized-derivative-lower-bound}}

\begin{proof}[Proof of Theorem~\ref{thm:randomized-derivative-lower-bound}]
Set $c_p:=(16K)^{-1}$ and
\[
b:=
\max_{r\in\mathcal I}
\left(\frac{\Delta}{L_r}\right)^{\frac{1}{r+\beta_r}},
\qquad
S=\frac{\Delta}{b},
\qquad
\widehat\varepsilon:=\frac{\varepsilon}{S}.
\]
Since $\varepsilon\le c_pS$, we have
$\widehat\varepsilon\le(8K)^{-1}$.
Apply Lemma~\ref{lem:hard-family-properties} with accuracy
$\widehat\varepsilon$, and define $F_j(x):=\Delta f_j(x/b)$.

\paragraph{Rescaling.}
For $1\le r\le p$, the chain rule gives
$F_j^{(r)}(x)=\Delta b^{-r}f_j^{(r)}(x/b)$.
Consequently, for every $r\in\mathcal I$,
\[
[F_j^{(r)}]_{\beta_r}
=
\frac{\Delta}{b^{r+\beta_r}}[f_j^{(r)}]_{\beta_r}
\le
\frac{\Delta}{b^{r+\beta_r}}
\le L_r,
\qquad
F_j(0)-\inf F_j
=
\Delta\bigl(f_j(0)-\inf f_j\bigr)
\le\Delta.
\]
Thus $F_j\in\mathcal F_{\mathcal I,\boldsymbol\beta}
(\mathbf L,\Delta)$.
Since $F_j'(x)=Sf_j'(x/b)$, a point $x$ is
$\varepsilon$-stationary for $F_j$ if and only if $x/b$ is
$\widehat\varepsilon$-stationary for $f_j$.
Every successful output therefore belongs to $bH_j$, and these
sets are pairwise disjoint.
Furthermore, outside $bH_j$, the oracle returns the common response
\[
\mathcal O_{F_j}^{1:p}(x)
=
v:=\left(-\frac{S\delta}{K},0,\ldots,0\right).
\]

\paragraph{Query counting.}
Let $A$ use at most $N$ queries on every run.
The case $N\ge m$ is immediate, so assume $N<m$.
Fix a random seed $r$ and run $A$ with the constant response $v$.
This execution terminates within $N$ queries, since its first
$N$ responses remain consistent with at least $m-N>0$ instances.

Let $Q_r$ be the set of indices whose regions $bH_j$ are queried,
and let $\bar x_r$ be the output.
Then $|Q_r|\le N$, and indistinguishability gives
$\widehat x_A(F_j;r)=\bar x_r$ for every $j\notin Q_r$.
By localization and disjointness,
\[
\sum_{j=1}^m
\mathbf 1\!\left\{
    |F_j'(\widehat x_A(F_j;r))|<\varepsilon
\right\}
\le
|Q_r|+
\sum_{j\notin Q_r}\mathbf 1\{\bar x_r\in bH_j\}
\le N+1.
\]
Averaging over the random seed and using the success guarantee,
\[
\frac12
\le
\frac1m\sum_{j=1}^m
\mathbb P\!\left(
    |F_j'(\widehat x_A(F_j))|<\varepsilon
\right)
\le\frac{N+1}{m}.
\]
Since $m\ge4$ and $m\ge(4K\widehat\varepsilon)^{-1}$,
\[
N\ge\frac m2-1
\ge\frac m4
\ge\frac{1}{16K\widehat\varepsilon}
=c_p\frac{S}{\varepsilon}.
\]
The bound holds for $\mathcal O^{1:p}$ and therefore also
for $\mathcal O^1$.
\end{proof}

\section{Proofs in Section~\ref{sec:4}}
\subsection{Proof of Lemma~\ref{lmm:derivative-hard-family}}
\label{app:hard-family-properties}

\begin{proof}
Since $\varphi$ is fixed, choose
\[
M_p:=\max_{0\le k\le p}
\|\varphi^{(k)}\|_{L^1(\mathbb R)}\in[1,\infty),
\qquad
K_{p,\boldsymbol\beta}\ge
\max_{1\le r\le p}(4M_p)^{\frac1{\nu_r-1}}.
\]

\noindent\textbf{Regularity and initial gap.}
Smoothness and compact support of the kernel give
\[
\begin{aligned}
&\psi_S\in C_c^\infty(\mathbb R),\qquad
  \operatorname{supp}\psi_S\subset E_S+[-\frac h2,\frac h2],\\
&0\le\psi_S(x)
  =\int_{\mathbb R}\mathbf1_{E_S}(x-t)\varphi_h(t)\,dt
  \le\int_{\mathbb R}\varphi_h(t)\,dt=1,\\
&f_S\in C^\infty(\mathbb R),\qquad f_S(0)=\Delta,\qquad
  f_S'=-2\varepsilon\psi_S\le0.
\end{aligned}
\]
Thus $f_S$ is nonincreasing, constant on both tails, and bounded below.
For $0\le k\le p$, differentiation and scaling yield
\[
\begin{aligned}
\varphi_h^{(k)}(t)
  &=\frac1{h^{k+1}}\varphi^{(k)}(\frac th),
&\|\varphi_h^{(k)}\|_1
  &=h^{-k}\|\varphi^{(k)}\|_1,\\
\psi_S^{(k)}&=\mathbf1_{E_S}*\varphi_h^{(k)},
&\|\psi_S^{(k)}\|_\infty
  &\le\|\mathbf1_{E_S}\|_\infty\|\varphi_h^{(k)}\|_1
   \le M_ph^{-k}.
\end{aligned}
\]
Consequently, for $1\le r\le p$,
\[
\begin{aligned}
f_S^{(r)}&=-2\varepsilon\psi_S^{(r-1)},
&f_S^{(r+1)}&=-2\varepsilon\psi_S^{(r)},\\
\|f_S^{(r)}\|_\infty&\le2\varepsilon M_ph^{-(r-1)},
&\|f_S^{(r+1)}\|_\infty&\le2\varepsilon M_ph^{-r}.
\end{aligned}
\]
For $d:=|x-y|>0$, the triangle inequality
and the mean value theorem give
\[
\begin{aligned}
|f_S^{(r)}(x)-f_S^{(r)}(y)|
&\le\min\{2\|f_S^{(r)}\|_\infty,
              \|f_S^{(r+1)}\|_\infty d\}\\
&\le\min\{4\varepsilon M_ph^{-(r-1)},
              2\varepsilon M_ph^{-r}d\}\\
&\le4\varepsilon M_ph^{-(r-1)}\min\{1,\frac dh\}\\
&\le4\varepsilon M_ph^{-(\nu_r-1)}d^{\beta_r},
\end{aligned}
\]
where $\min\{1,t\}\le t^{\beta_r}$ for $t\ge0$ and
$0<\beta_r\le1$.
Since $h^{\nu_r-1}\ge
K_{p,\boldsymbol\beta}^{\nu_r-1}\frac{\varepsilon}{L_r}$,
\[
[f_S^{(r)}]_{\beta_r}
\le4M_p\varepsilon h^{-(\nu_r-1)}
\le\frac{4M_p}{K_{p,\boldsymbol\beta}^{\nu_r-1}}L_r
\le L_r.
\]
Tonelli's theorem gives
\[
\begin{aligned}
\int_{\mathbb R}\psi_S(x)\,dx
&=\int_{\mathbb R}\varphi_h(t)
  \left(\int_{\mathbb R}\mathbf1_{E_S}(x-t)\,dx\right)dt\\
&=|E_S|\int_{\mathbb R}\varphi_h(t)\,dt
 =|E_S|\le2h|S|.
\end{aligned}
\]
Because the right-tail value of $f_S$ equals its infimum,
\[
\begin{aligned}
f_S(0)-\inf f_S
&=2\varepsilon\int_0^\infty\psi_S(t)\,dt\\
&\le2\varepsilon\int_{\mathbb R}\psi_S(t)\,dt
 \le4\varepsilon h|S|\le\Delta.
\end{aligned}
\]

\noindent\textbf{Oracle values.}
For $s\in S$, $|x-s|<\frac h2$, and
$t\in\operatorname{supp}\varphi_h$,
\[
|x-t-s|\le|x-s|+|t|<\frac h2+\frac h2=h,
\qquad x-t\in[s-h,s+h]\subset E_S.
\]
Consequently, throughout $(s-\frac h2,s+\frac h2)$,
\[
\begin{aligned}
\psi_S(x)
&=\int_{\mathbb R}\mathbf1_{E_S}(x-t)\varphi_h(t)\,dt
 =\int_{\mathbb R}\varphi_h(t)\,dt=1,\\
f_S'(x)&=-2\varepsilon,\qquad
f_S^{(j)}(x)=-2\varepsilon\psi_S^{(j-1)}(x)=0
\quad(2\le j\le p).
\end{aligned}
\]
\end{proof}

\subsection{Proof of Theorem~\ref{thm:deterministic-derivative-jet-lower}}
\label{app:deterministic-derivative-lower}

\begin{proof}
Take the constant in the theorem small enough that
$c_{p,\boldsymbol\beta}\le\frac1{16K_{p,\boldsymbol\beta}}$.
For every $1\le r\le p$, the accuracy assumption gives
$\varepsilon\le c_{p,\boldsymbol\beta}
L_r^{\frac1{\nu_r}}\Delta^{\frac{\nu_r-1}{\nu_r}}$.
Since $\frac{\nu_r}{\nu_r-1}>1$ and
$0<c_{p,\boldsymbol\beta}<1$,
\[
\begin{aligned}
\varepsilon^{\frac{\nu_r}{\nu_r-1}}L_r^{-\frac1{\nu_r-1}}
&\le c_{p,\boldsymbol\beta}^{\frac{\nu_r}{\nu_r-1}}\Delta
 \le c_{p,\boldsymbol\beta}\Delta
 \le\frac{\Delta}{16K_{p,\boldsymbol\beta}},\\
\varepsilon h
&=K_{p,\boldsymbol\beta}\max_{1\le r\le p}
  \varepsilon^{\frac{\nu_r}{\nu_r-1}}L_r^{-\frac1{\nu_r-1}}
 \le\frac{\Delta}{16}.
\end{aligned}
\]

Suppose a correct deterministic algorithm has worst-case query budget
$Q<\frac{\Delta}{16\varepsilon h}$.
Simulate it by reporting $f'(x)=-2\varepsilon$ and
$f^{(j)}(x)=0$ for $2\le j\le p$ at every query.
If it outputs within $Q$ queries, let $S$ contain the origin,
all query locations, and its output $\widehat x$.
If it requests query $Q+1$, let $S$ contain the origin and the
first $Q+1$ query locations.
In either case, $0\in S$, $|S|\le Q+2$, and
\[
4\varepsilon h|S|
\le4\varepsilon h(Q+2)
<\frac{\Delta}{4}+8\varepsilon h
\le\frac{3\Delta}{4}<\Delta.
\]
Lemma~\ref{lmm:derivative-hard-family} therefore supplies an admissible
$f_S\in C^\infty(\mathbb R)$ satisfying
\[
f_S'(s)=-2\varepsilon,\qquad
f_S^{(j)}(s)=0
\quad(s\in S,\ 2\le j\le p).
\]

Matching oracle replies and determinism reproduce the simulation
on $f_S$ inductively.
Thus the algorithm either outputs $\widehat x\in S$ with
$|f_S'(\widehat x)|=2\varepsilon>\varepsilon$, or requests query
$Q+1$, contradicting correctness or the query budget.
Nontermination is likewise impossible: taking $S$ to contain the origin
and the at most $Q$ queried points, the corresponding admissible $f_S$
reproduces the same nonterminating execution.

Hence every correct algorithm needs at least
\[
\frac{\Delta}{16\varepsilon h}
=\frac1{16K_{p,\boldsymbol\beta}}
\min_{1\le r\le p}
L_r^{\frac1{\nu_r-1}}\Delta
\varepsilon^{-\frac{\nu_r}{\nu_r-1}}
\]
queries. Since $c_{p,\boldsymbol\beta}\le\frac1{16K_{p,\boldsymbol\beta}}$,
this proves the stated bound; moreover, $0\in S$ gives
$f_S'(0)=-2\varepsilon$.
\end{proof}




\section{Proofs in Section~\ref{sec:5}}
\subsection{Proof of Theorem~\ref{thm:randomized-derivative-upper-bound}}
\label{app:derivative-only-upper}

\begin{proof}[Proof of Theorem~\ref{thm:randomized-derivative-upper-bound}]
Fix $r\in\mathcal I$ and write $\nu:=r+\beta_r$.
We first assume $[f^{(r)}]_{\beta_r}\le1$,
$f(0)-\inf f\le1$, and $0<\varepsilon\le1$.
The initial query either returns an $\varepsilon$-stationary point
or, after reflection if necessary, gives $f'(0)\le-\varepsilon$.
Reflection preserves the smoothness conditions and initial gap.
Set $T:=\frac{2}{\varepsilon}$.
The fundamental theorem of calculus and the gap bound give
\begin{equation}
\label{eq:derivative-upper-integral}
\int_0^T f'(x)\,dx=f(T)-f(0)\ge-1,
\qquad
\int_0^T(f'(x)+\varepsilon)\,dx\ge1.
\end{equation}

\paragraph{Deterministic algorithm.}
Let $P$ be the approximation with error $\frac{\varepsilon}{8}$.
By Eq.~\eqref{eq:derivative-upper-integral},
\[
\frac1T\int_0^T P(x)\,dx
\ge-\frac1T-\frac{\varepsilon}{8}
=-\frac{5\varepsilon}{8}.
\]
Thus $P$ attains a value strictly above $-\frac{3\varepsilon}{4}$.
Since $P(0)=f'(0)\le-\varepsilon$ and $P$ is continuous,
it crosses this level.
Every such crossing $z$ satisfies
\[
|f'(z)|
\le |P(z)|+\frac{\varepsilon}{8}
=\frac{7\varepsilon}{8}<\varepsilon.
\]
A crossing can be found from the polynomial pieces without further
oracle queries.
Including the initial query, Lemma~\ref{lmm:derivative-interpolation}
bounds the deterministic query count by
\[
O_{r,\beta_r}\!\left(
1+T\left(\frac{\varepsilon}{8}\right)^{-\frac1{\nu-1}}
\right)
=O_{r,\beta_r}\!\left(\varepsilon^{-\frac{\nu}{\nu-1}}\right).
\]

\paragraph{Randomized algorithm.}
Now let $P$ have error $1$, and set
$w:=(P+1+\varepsilon)_+$ and
$G:=\{x\in[0,T]:f'(x)>-\varepsilon\}$.
Since $w\ge(f'+\varepsilon)_+$,
\[
A:=\int_G w(x)\,dx
\ge\int_G(f'(x)+\varepsilon)\,dx
\ge\int_0^T(f'(x)+\varepsilon)\,dx
\ge1.
\]
On $G^c$, the approximation bound gives
$P+1+\varepsilon\le f'+2+\varepsilon\le2$.
Hence $B:=\int_{G^c}w(x)\,dx\le2T$.
In particular, $W:=A+B>0$, and sampling from $\rho:=\frac{w}{W}$
gives
\[
\mathbb P(X\in G)
=\frac{A}{A+B}
\ge\frac1{1+2T}
=\frac1{1+4/\varepsilon}.
\]
The density is determined by the known polynomial pieces;
its integral and samples require no additional oracle queries.
For $N:=\left\lceil(1+\frac4\varepsilon)\log2\right\rceil$
independent samples,
\[
\mathbb P(\text{all $N$ samples miss }G)
\le\left(1-\frac1{1+4/\varepsilon}\right)^N
\le\exp\!\left(-\frac{N}{1+4/\varepsilon}\right)
\le\frac12.
\]
Every hit either satisfies $|f'(X)|<\varepsilon$
or has $f'(X)\ge\varepsilon$, yielding a sign bracket with the origin.

\paragraph{Bracket refinement and query count.}
Bisection either returns an $\varepsilon$-stationary point
or preserves $f'(a)\le-\varepsilon$ and $f'(b)\ge\varepsilon$.
Starting from length at most $T$, it reaches
$b-a\le h_\varepsilon:=\varepsilon^{\frac1{\nu-1}}$
within
\[
\left\lceil\log_2\left(\frac{T}{h_\varepsilon}\right)\right\rceil
=O_{r,\beta_r}(1+\log(\varepsilon^{-1}))
\]
queries.
Apply Lemma~\ref{lmm:derivative-interpolation} on this interval
with error $\frac{\varepsilon}{2}$.
The resulting continuous piecewise polynomial $Q$ satisfies
$Q(a)=f'(a)<0<f'(b)=Q(b)$, so it has a root $z$.
Every root satisfies
$|f'(z)|\le\frac{\varepsilon}{2}<\varepsilon$.
The terminal approximation uses
\[
O_{r,\beta_r}\!\left(
1+(b-a)\left(\frac{\varepsilon}{2}\right)^{-\frac1{\nu-1}}
\right)=O_{r,\beta_r}(1)
\]
queries.
Thus every hit of $G$ leads to a valid output.
The initial query, coarse approximation, at most $N$ samples,
and at most one bracket refinement together use
$O_{r,\beta_r}(\varepsilon^{-1})$ queries on every execution,
since $0<\varepsilon\le1$.
The success probability is at least $\frac12$.

\paragraph{General parameters.}
By Lemma~\ref{lmm:normal}, the normalized accuracy is
\[
\widehat\varepsilon
:=\frac{\varepsilon}
{L_r^{\frac1\nu}\Delta^{\frac{\nu-1}{\nu}}}.
\]
The theorem's assumption ensures $0<\widehat\varepsilon\le1$.
The algorithms use only the selected smoothness condition,
so the other normalized conditions impose no restriction on the argument.
Substituting $\widehat\varepsilon$ into the two normalized query bounds
gives, respectively,
\[
O_{r,\beta_r}\!\left(
L_r^{\frac1{\nu-1}}\Delta
\varepsilon^{-\frac{\nu}{\nu-1}}\right),
\qquad
O_{r,\beta_r}\!\left(
L_r^{\frac1\nu}\Delta^{\frac{\nu-1}{\nu}}\varepsilon^{-1}\right).
\]
Normalization preserves query counts and success probabilities.
Taking the maximum of the constants over $r\in\mathcal I$
and minimizing each bound over $r$ proves the theorem.
\end{proof}

\subsection{Proof of the Lemma~\ref{lmm:derivative-interpolation}}
\label{app:derivative-interpolation}

\begin{proof}[Proof of Lemma~\ref{lmm:derivative-interpolation}]
Write $\beta:=\beta_r$ and $\alpha:=\nu-1=r-1+\beta>0$.
If $a=b$, one derivative query gives the constant polynomial
$P_\delta\equiv f'(a)$, which satisfies all the required properties.
Henceforth assume $a<b$.

For a subinterval $J=[c,d]\subseteq I$, let $R_J$ be the
degree-$(r-1)$ Taylor polynomial of $f'$ at $c$.
If $r=1$, then $R_J\equiv f'(c)$, and Hölder continuity gives
$\|f'-R_J\|_{\infty,J}\le |J|^\beta$.
If $r\ge2$, the integral Taylor formula gives
\[
f'(x)-R_J(x)
=
\frac{1}{(r-2)!}
\int_c^x
(x-t)^{r-2}
\bigl(f^{(r)}(t)-f^{(r)}(c)\bigr)\,dt.
\]
Since $[f^{(r)}]_\beta\le1$, the absolute value of the difference
inside the integral is at most $|J|^\beta$.
Integrating the kernel therefore yields
$|f'(x)-R_J(x)|
\le \frac{|J|^\beta(x-c)^{r-1}}{(r-1)!}
\le \frac{|J|^\alpha}{(r-1)!}$.
Thus, for every $r\ge1$,
$\|f'-R_J\|_{\infty,J}
\le \frac{|J|^\alpha}{(r-1)!}$.

Let $q:=\max\{r,2\}$, and let $\mathsf P_J$ interpolate $f'$
at $q$ equally spaced nodes of $J$, including both endpoints.
Let $\Lambda_q$ be the Lebesgue constant of these nodes.
It depends only on $q$, since affine changes of the interval
preserve the interpolation operator norm.
The interpolation operator has norm $\Lambda_q$ and reproduces
$R_J$, whose degree is at most $q-1$.
Applying it to $f'-R_J$ and using the triangle inequality gives
\[
\|f'-\mathsf P_J\|_{\infty,J}
\le
(1+\Lambda_q)\|f'-R_J\|_{\infty,J}
\le a_r|J|^\alpha,
\]
where
$a_r:=\max\left\{1,\frac{1+\Lambda_q}{(r-1)!}\right\}$.
This argument also covers $r=1$, since the two-node interpolant
reproduces the constant polynomial $R_J$.

Set
$h_\delta:=\left(\frac{\delta}{a_r}\right)^{\frac{1}{\alpha}}$
and $n:=\left\lceil\frac{|I|}{h_\delta}\right\rceil$.
Partition $I$ into $n$ equal intervals and construct
$\mathsf P_J$ on each interval using derivative queries.
Adjacent interpolants agree with $f'$ at their common endpoint,
so the resulting piecewise polynomial $P_\delta$ is continuous.
The same endpoint interpolation gives
$P_\delta(a)=f'(a)$ and $P_\delta(b)=f'(b)$.
Every mesh interval has length at most $h_\delta$, and hence
$\|f'-P_\delta\|_{\infty,I}
\le a_rh_\delta^\alpha=\delta$.

Finally, each interval requires at most $q$ derivative queries.
The total number is therefore at most
\[
qn
\le
q\left(
1+a_r^{\frac{1}{\alpha}}|I|
\delta^{-\frac{1}{\alpha}}
\right)
=
O_{r,\beta_r}\!\left(
1+|I|\delta^{-\frac{1}{\nu-1}}
\right).
\]
All query locations are deterministic.
The Taylor polynomials are used only in the error analysis;
the construction queries only $f'$ at the interpolation nodes.
\end{proof}

\section{Proofs in Section~\ref{sec:6}}

\subsection{Proof of Lemma~\ref{lmm:randomized-hard-family}}
\label{app:randomized-hard-family-properties}

\begin{proof}
The fixed kernel gives $\Phi\in C^\infty(\mathbb R)$,
$0\le\Phi\le1$, and $\Phi^{(q)}=0$ outside $(0,1)$ for every $q\ge1$.
In particular, all positive-order derivatives of $\Phi$ are bounded.
Choose
\[
K_{p,\boldsymbol\beta}\ge
\max\left\{1,
\max_{1\le r\le p}2\|\Phi^{(r)}\|_\infty,
\max_{1\le r\le p}\|\Phi^{(r+1)}\|_\infty\right\}.
\]
The accuracy assumption gives $m\ge2$, while the definition of $m$
gives $\delta m=1$ and $\delta\ge2K_{p,\boldsymbol\beta}\varepsilon$.

\noindent\textbf{Regularity and initial gap.}
Since $1-j\le0$ and $m-j+1\ge1$, the endpoint values on the base period are
$f_j(0)=f_j(m)=\frac1{K_{p,\boldsymbol\beta}}$.
Moreover, $f_j'(0)=f_j'(m)=-\frac{\delta}{K_{p,\boldsymbol\beta}}$
and $f_j^{(q)}(0)=f_j^{(q)}(m)=0$ for every $q\ge2$.
Thus the periodic extension matches in every derivative at each multiple
of $m$. The same values and derivatives match the linear extension at $0$,
so $f_j\in C^\infty(\mathbb R)$.

On $[0,m]$, both $1-\delta x$ and $\Phi(x-j+1)$ are nonnegative and
on $(-\infty,0]$, $f_j(x)\ge\frac1{K_{p,\boldsymbol\beta}}$.
Periodicity therefore gives $f_j\ge0$ on $\mathbb R$, and hence
$f_j(0)-\inf f_j\le\frac1{K_{p,\boldsymbol\beta}}\le1$.
Also, $f_j'(0)=-\frac{\delta}{K_{p,\boldsymbol\beta}}\le-2\varepsilon$.

Fix $1\le r\le p$.
The nonconstant part of $f_j^{(r)}$ consists of translates of
$\frac{\Phi^{(r)}}{K_{p,\boldsymbol\beta}}$ on the disjoint hidden intervals
and is zero elsewhere. Its oscillation is therefore at most
$\frac{2\|\Phi^{(r)}\|_\infty}{K_{p,\boldsymbol\beta}}$.
Smooth gluing also gives
$\|f_j^{(r+1)}\|_\infty\le
\frac{\|\Phi^{(r+1)}\|_\infty}{K_{p,\boldsymbol\beta}}$.
For $d:=|x-y|>0$, these two bounds imply
\[
\begin{aligned}
|f_j^{(r)}(x)-f_j^{(r)}(y)|
&\le\min\left\{
\frac{2\|\Phi^{(r)}\|_\infty}{K_{p,\boldsymbol\beta}},
\frac{\|\Phi^{(r+1)}\|_\infty}{K_{p,\boldsymbol\beta}}d
\right\}\\
&\le\min\{1,d\}\le d^{\beta_r}.
\end{aligned}
\]
The last inequality follows from $d\le d^{\beta_r}$ when $d\le1$
and $1\le d^{\beta_r}$ when $d\ge1$.
Dividing by $d^{\beta_r}$ and taking the supremum gives
$[f_j^{(r)}]_{\beta_r}\le1$.

\noindent\textbf{Indistinguishability.}
For $x\le0$, all functions have the same linear jet, so the oracle response
is independent of $j$.
For $x>0$, let $t:=x-m\lfloor x/m\rfloor\in[0,m)$ and
$\ell:=1+\lfloor t\rfloor\in[m]$.
If $j>\ell$, then $t-j+1\le0$; if $j<\ell$, then $t-j+1\ge1$.
Consequently,
\[
f_j(x)=\frac1{K_{p,\boldsymbol\beta}}
\begin{cases}
1-\delta t, & j>\ell,\\
2-\delta t, & j<\ell,\\
1-\delta t+\Phi(t-\ell+1), & j=\ell.
\end{cases}
\]
For $j\ne\ell$, the positive-order derivatives of the step vanish, giving
$f_j'(x)=-\frac{\delta}{K_{p,\boldsymbol\beta}}$ and
$f_j^{(r)}(x)=0$ for $2\le r\le p$.
Thus all indices $j>\ell$ give one common full response, and all indices
$j<\ell$ give another. The single remaining index $j=\ell$ contributes
at most one additional response.
At an integer residue, the flat endpoint derivatives make this transition
response coincide with a linear response.
Hence every fixed query has at most three possible full responses,
including at the interval and period boundaries.

\noindent\textbf{Localization of stationary points.}
Outside the open copies $(j-1+km,j+km)$, $k\ge0$, the step derivative
vanishes, including at their endpoints. The negative linear extension
has the same derivative, so every such point satisfies
$f_j'(x)=-\frac{\delta}{K_{p,\boldsymbol\beta}}\le-2\varepsilon$.
It therefore cannot be $\varepsilon$-stationary.

Finally, these open unit intervals have integer left endpoints.
If copies associated with $j$ and $j'$ intersect, their left endpoints
must agree, giving $j-j'=(k'-k)m$.
Since $|j-j'|\le m-1$, this implies $j=j'$.
Thus a stationary point lies in copies associated with exactly one index.
\end{proof}
\subsection{Proof of Theorem~\ref{thm:randomized-full-jet-lower-bound}}
\label{app:randomized-full-jet-lower-bound}

\begin{proof}
Fix $K_{p,\boldsymbol\beta}$ as in Lemma~\ref{lmm:randomized-hard-family}.
Choose the theorem constant sufficiently small that
$c_{p,\boldsymbol\beta}\le\frac1{64K_{p,\boldsymbol\beta}^2}$
and $c_{p,\boldsymbol\beta}\le\frac1{2\log3}$.

\noindent\textbf{Normalized parameters.}
Suppose $L_1=\cdots=L_p=\Delta=1$ and
$0<\varepsilon\le c_{p,\boldsymbol\beta}$.
Then $\varepsilon\le\frac1{4K_{p,\boldsymbol\beta}}$, so the hard family
is admissible by Lemma~\ref{lmm:randomized-hard-family}.
Since $\frac1{2K_{p,\boldsymbol\beta}\varepsilon}\ge2$, its size satisfies
\[
m=\left\lfloor\frac1{2K_{p,\boldsymbol\beta}\varepsilon}\right\rfloor
\ge\frac1{2K_{p,\boldsymbol\beta}\varepsilon}-1
\ge\frac1{4K_{p,\boldsymbol\beta}\varepsilon}.
\]

Consider any randomized algorithm using at most $N$ queries per execution
and succeeding with probability at least $\frac12$ on every admissible input.
Let $J$ be uniform on $[m]$, independently of the algorithm's random seed $U$,
and run the algorithm on $f_J$.
Fix the entire random seed, including any randomness used to choose the output.
Each query is then determined by the preceding replies, and each query node
has at most three children by the indistinguishability property.

There are at most $3^N$ output leaves.
Indeed, a depth-zero tree has at most one output leaf, and each of the
at most three subtrees of a query node has remaining depth at most $N-1$.
Induction gives the bound. Stopping before depth $N$ is allowed in this
argument, and a nonterminating branch contributes no output leaf.
Each output leaf has a fixed point. By localization, that point is
$\varepsilon$-stationary for at most one index $j$, whether or not it was
queried. Thus, for each fixed seed, the algorithm succeeds on at most
$3^N$ members of the family.

Since $J$ is uniform and independent of $U$, averaging over the seed bounds
the success probability by $\frac{3^N}{m}$.
The success guarantee on every $f_j$ bounds the same probability below by
$\frac12$. Therefore,
\[
\frac12
\le\mathbb P\{\text{success}\}
\le\frac{3^N}{m},
\qquad
N\ge\frac{\log(m/2)}{\log3}
\ge\frac{\log(1/\varepsilon)-\log(8K_{p,\boldsymbol\beta})}{\log3}.
\]
Our choice of $c_{p,\boldsymbol\beta}$ ensures
$\log(1/\varepsilon)\ge2\log(8K_{p,\boldsymbol\beta})$.
Consequently, $N\ge\frac1{2\log3}\log(1/\varepsilon)$,
and the choice of $c_{p,\boldsymbol\beta}$ gives
$N\ge c_{p,\boldsymbol\beta}\log(1/\varepsilon)$.

\noindent\textbf{General parameters.}
Choose $r_*$ attaining the minimum of
$L_r^{\frac1{\nu_r}}\Delta^{\frac{\nu_r-1}{\nu_r}}$
over $1\le r\le p$, and set
$s:=\left(\frac{\Delta}{L_{r_*}}\right)^{\frac1{\nu_{r_*}}}$
as in Lemma~\ref{lmm:normal}.
For every $1\le r\le p$, the normalized bounds satisfy
\[
\widehat L_r=\frac{L_rs^{\nu_r}}{\Delta}
=\left(
\frac{L_r^{\frac1{\nu_r}}\Delta^{\frac{\nu_r-1}{\nu_r}}}
{L_{r_*}^{\frac1{\nu_{r_*}}}
\Delta^{\frac{\nu_{r_*}-1}{\nu_{r_*}}}}
\right)^{\nu_r}\ge1.
\]
Hence the normalized class contains the unit-parameter hard family.
Its normalized accuracy is
\[
\widehat\varepsilon=\frac{s\varepsilon}{\Delta}
=\frac{\varepsilon}
{\displaystyle\min_{1\le r\le p}
L_r^{\frac1{\nu_r}}\Delta^{\frac{\nu_r-1}{\nu_r}}}
\le c_{p,\boldsymbol\beta}.
\]
Any algorithm correct on the normalized class must therefore satisfy
the preceding lower bound at accuracy $\widehat\varepsilon$.
Lemma~\ref{lmm:normal} preserves the query count and success probability,
so the original class also requires at least
$c_{p,\boldsymbol\beta}\log(1/\widehat\varepsilon)$ queries.
Substituting the displayed accuracy proves the theorem.
\end{proof}

\section{Proofs in Section~\ref{sec:7}}
\subsection{Proof of Lemma~\ref{lmm:value-interpolation}}
\label{app:value-interpolation}

\begin{proof}
Write $\nu:=r+\beta_r$ and set $q:=\max\{r+1,3\}$.
Construct $\mathsf Q_I$ by interpolation at $q$ equally spaced
nodes of $I$, including both endpoints.
Let $I=[a,b]$, $H:=b-a$, and let $R$ be the degree-$r$
Taylor polynomial of $f$ at $a$.
Taylor's integral remainder and
$|f^{(r)}(t)-f^{(r)}(a)|\le H^{\beta_r}$ give
\[
\|f-R\|_{\infty,I}\le\frac{H^\nu}{r!},
\qquad
\|f'-R'\|_{\infty,I}
\le\frac{H^{\nu-1}}{(r-1)!}.
\]
For the first estimate, the remainder is
$\frac{1}{(r-1)!}\int_a^x
(x-t)^{r-1}(f^{(r)}(t)-f^{(r)}(a))\,dt$.
The second follows from the analogous formula for $f'$ when $r\ge2$;
when $r=1$, it is precisely the H\"older bound on $f'$.

Let $\ell_1,\ldots,\ell_q$ be the Lagrange basis polynomials
for the chosen nodes on $[0,1]$, and set
$\Gamma_q:=\sum_{j=1}^q\|\ell_j'\|_{\infty,[0,1]}$.
Since $q-1\ge r$, interpolation reproduces $R$.
After rescaling $I$ to $[0,1]$, the derivative of the interpolation
operator has norm at most $\frac{\Gamma_q}{H}$.
Consequently,
\[
\begin{aligned}
\|\mathsf Q_I'-f'\|_{\infty,I}
&\le \|R'-f'\|_{\infty,I}
   +\|(\mathsf Q_I-R)'\|_{\infty,I}\\
&\le \frac{H^{\nu-1}}{(r-1)!}
   +\frac{\Gamma_q}{H}\|f-R\|_{\infty,I}\\
&\le \left(\frac{1}{(r-1)!}
   +\frac{\Gamma_q}{r!}\right)H^{\nu-1}.
\end{aligned}
\]
Thus $\|f'-\mathsf Q_I'\|_{\infty,I}\le a_r|I|^{\nu-1}$ holds with
$a_r:=\max\{1,\frac{1}{(r-1)!}+\frac{\Gamma_q}{r!}\}$.
Endpoint interpolation gives $\mathsf Q_I(a)=f(a)$ and
$\mathsf Q_I(b)=f(b)$.
The construction uses at most $q=O_r(1)$ function-value queries.
\end{proof}

\subsection{Proof of Theorem~\ref{thm:deterministic-zeroth-order-upper}}
\label{app:function-value-upper}

\begin{proof}
Fix $r\in\mathcal I$ and write $\nu:=r+\beta_r$.
We first assume $[f^{(r)}]_{\beta_r}\le1$,
$f(0)-\inf f\le1$, and $0<\varepsilon\le1$.

\noindent\textbf{Derivative estimates.}
Lemma~\ref{lmm:value-interpolation} on an interval of length
$h_\varepsilon$ gives $|D(x)-f'(x)|\le\frac{\varepsilon}{8}$
at either endpoint.
It also gives the required uniform bound for the terminal interpolant.

Whenever $|D(x)|\le\frac{3\varepsilon}{4}$, the returned point satisfies
$|f'(x)|\le\frac{7\varepsilon}{8}<\varepsilon$.
Otherwise, $D(x)<-\frac{3\varepsilon}{4}$ implies
$f'(x)<-\frac{5\varepsilon}{8}$, and
$D(x)>\frac{3\varepsilon}{4}$ implies
$f'(x)>\frac{5\varepsilon}{8}$.
If the initial estimate is positive, reflection negates both
$f'(0)$ and its cached estimate, preserving their error bound.
It also preserves the H\"older condition and initial gap.
The reflected interpolation interval still has the origin as an endpoint.

\noindent\textbf{Interval invariant.}
Every active interval has
$D(a)<-\frac{3\varepsilon}{4}$, both endpoint estimates lie outside
$[-\frac{3\varepsilon}{4},\frac{3\varepsilon}{4}]$, and
at least one of the certificates
\[
D(b)>\frac{3\varepsilon}{4},
\qquad\text{or}\qquad
\frac{f(b)-f(a)}{b-a}\ge-\frac{\varepsilon}{4}.
\]
Initially, $a=0$ and $b=T:=\frac{4}{\varepsilon}$.
The gap bound gives $f(T)-f(0)\ge-1$, hence the secant certificate.
The estimate at $T$ either returns a stationary point or lies outside
the acceptance range, establishing the invariant.

Let $c:=\frac{a+b}{2}$.
An accepted estimate returns a valid point as proved above.
If $D(c)>\frac{3\varepsilon}{4}$, replacing $b$ by $c$
establishes the first certificate and preserves the negative left estimate.
Otherwise, $D(c)<-\frac{3\varepsilon}{4}$.
If $D(b)>\frac{3\varepsilon}{4}$, replacing $a$ by $c$
preserves both the first certificate and the negative left estimate.

In the remaining case, both $D(c)$ and $D(b)$ are negative.
The parent interval must therefore have the secant certificate.
The parent secant is the average of the two half-interval secants,
so at least one half has secant slope at least
$-\frac{\varepsilon}{4}$.
Since $c-a=b-c$, the left half has the larger slope exactly when
$2f(c)\ge f(a)+f(b)$.
The final two branches select that half, whose left endpoint is either
$a$ or $c$ and has a negative estimate.
Thus every nonreturning iteration preserves the invariant and halves
the interval length.

\noindent\textbf{Terminal output.}
Let $P:=\mathsf Q_{[a,b]}'$.
Since $b-a\le h_\varepsilon$, we have
$\|P-f'\|_{\infty,[a,b]}\le\frac{\varepsilon}{8}$.
The negative left estimate gives
$P(a)\le f'(a)+\frac{\varepsilon}{8}<-\frac{\varepsilon}{2}$.
If the first certificate holds, then
$P(b)\ge f'(b)-\frac{\varepsilon}{8}>\frac{\varepsilon}{2}$.
If the secant certificate holds, endpoint interpolation gives
\[
\frac{1}{b-a}\int_a^b P(x)\,dx
=\frac{\mathsf Q_{[a,b]}(b)-\mathsf Q_{[a,b]}(a)}{b-a}
=\frac{f(b)-f(a)}{b-a}
\ge-\frac{\varepsilon}{4}.
\]
Hence $P$ attains a value at least $-\frac{\varepsilon}{4}$.
In both cases, continuity gives a point $z\in[a,b]$ with
$P(z)=-\frac{\varepsilon}{2}$.
Every such point satisfies
$|f'(z)|\le|P(z)|+\frac{\varepsilon}{8}
=\frac{5\varepsilon}{8}<\varepsilon$.
The root is computed from the known polynomial without additional
oracle queries.

\noindent\textbf{Query budget.}
Lemma~\ref{lmm:value-interpolation} gives $O_r(1)$ value queries
for each derivative estimate and for the terminal interpolant.
The loop makes at most
$\left\lceil\log_2\left(\frac{4/\varepsilon}{h_\varepsilon}\right)\right\rceil$
iterations, in addition to the two initial estimates.
The ratio in the logarithm equals
$4(8a_r)^{\frac{1}{\nu-1}}\varepsilon^{-\frac{\nu}{\nu-1}}$,
so the total is
$O_{r,\beta_r}(1+\log(1/\varepsilon))$.
Cached endpoint values suffice for every secant comparison.

\noindent\textbf{General parameters.}
Lemma~\ref{lmm:normal} gives normalized accuracy
$\widehat\varepsilon:=\frac{\varepsilon}
{L_r^{\frac{1}{\nu}}\Delta^{\frac{\nu-1}{\nu}}}$.
Run the normalized algorithm with accuracy
$\min\{\widehat\varepsilon,1\}$.
Its output also meets accuracy $\widehat\varepsilon$.
Each normalized function-value query is simulated by one original
function-value query, and rescaling preserves the stationarity guarantee.
Thus, for every selected $r$, the query bound is
\[
O_{r,\beta_r}\left(
1+\log_+\left(
\frac{L_r^{\frac{1}{\nu}}\Delta^{\frac{\nu-1}{\nu}}}{\varepsilon}
\right)\right).
\]
Taking the maximum of the constants over $1\le r\le p$ and choosing
an index minimizing
$L_r^{\frac{1}{\nu_r}}\Delta^{\frac{\nu_r-1}{\nu_r}}$
proves the theorem.
\end{proof}
\end{document}